\date{}
\newlength{\defbaselineskip}
\newcommand{\setlinespacing}[1]%
           {\setlength{\baselineskip}{#1 \defbaselineskip}}
\newcommand{\N}{{\mathbb{N}}}
\newcommand{\actaqed}{\hfill $\actabox$}
{\medskip\noindent \textit{Proof of #1. }}%
{\actaqed \medskip}
\def\D{{\mathcal D}}
\def\Di{{\mathcal D}}
\def\cA{{\mathcal A}}
\def\C{{\mathcal C}}
\def\cC{{\mathcal C}}
\def\cG{{\mathcal G}}
\def \Tr{\mathcal T}
\def \cK{\mathcal K}
\def \cX{\mathcal X}
\def \cM{\mathcal M}
\def\R{{\mathbb R}}
\def\Z{\mathbb Z}
\def \T{\mathbb T}
\def\bbC{\mathbb C}
\def \<{\langle}
\def\>{\rangle}
\def \Og{\Omega}
\def \va{\varepsilon}
\def \ff{\varphi}
\def\al{\alpha}
\def\bt{\beta}
\def \sp{\operatorname{span}}
\def\bx{\mathbf x}
\def\bk{\mathbf k}
\def\bj{\mathbf j}
\def\bs{\mathbf s}
\def\bN{\mathbf N}
\def\bW{\mathbf W}
\def\bF{\mathbf F}
\def\bA{\mathbf A}
\def\bt{\beta}
\newtheorem{Theorem}{Theorem}[section]
\newtheorem{Lemma}{Lemma}[section]
\newtheorem{Definition}{Definition}[section]
\newtheorem{Proposition}{Proposition}[section]
\newtheorem{Corollary}{Corollary}[section]
\numberwithin{equation}{section}
\newcommand{\be}{\begin{equation}}
\newcommand{\ee}{\end{equation}}
\def\al{{\alpha}}
\def\Bl{\Bigl}
\def\Br{\Bigr}
\def\f{\frac}
\def\vi{\varphi}
\def\va{\varepsilon}
\def\CD{{\mathcal D}}
\def\CC{{\mathbb C}}
\def\NN{{\mathbb N}}
\def\Og{\Omega}
\def\sub{\substack}
\def\Ld{\Lambda}
\def\cA{\mathcal{A}}
\def\cX{\mathcal{X}}
\def\spn{\operatorname{span}}
\def\bx{\mathbf{x}}
\def\bx{\mathbf{x}}
\DeclareSymbolFont{fouriersymbols}{FMS}{futm}{m}{n}
\DeclareSymbolFont{fourierlargesymbols}{FMX}{futm}{m}{n}
\DeclareMathDelimiter{\VT}{\mathord}{fouriersymbols}{152}{fourierlargesymbols}{147}
\begin{document}

\title{Sparse sampling recovery in integral norms on some function classes}

\author{    V. Temlyakov 	\footnote{
		This work was supported by the Russian Science Foundation under grant no. 23-71-30001, https://rscf.ru/project/23-71-30001/, and performed at Lomonosov Moscow State University.
  }}

\newcommand{\Addresses}{{
  \bigskip
  \footnotesize

  V.N. Temlyakov, \textsc{ Steklov Mathematical Institute of Russian Academy of Sciences, Moscow, Russia;\\ Lomonosov Moscow State University; \\ Moscow Center of Fundamental and Applied Mathematics; \\ University of South Carolina, USA.
  \\
E-mail:} \texttt{temlyakovv@gmail.com}

}}
\maketitle

\begin{abstract}{This paper is a direct followup of the recent author's paper. 
In this paper we continue to analyze approximation and recovery properties with respect to systems satisfying universal sampling discretization property and a special unconditionality property. In addition we assume that the subspace spanned by our system satisfies some Nikol'skii-type inequalities. 
We concentrate on recovery with the error measured in the $L_p$ norm for $2\le p<\infty$. We apply a powerful nonlinear approximation method -- the Weak Orthogonal Matching Pursuit (WOMP) also known under the name Weak Orthogonal Greedy Algorithm (WOGA). We establish that the WOMP based on good points for the $L_2$-universal discretization provides good recovery in the $L_p$ norm for $2\le p<\infty$. For our recovery algorithms we obtain both the Lebesgue-type inequalities for individual functions and the error bounds for special classes of multivariate functions. 

We combine here two deep and powerful techniques -- Lebesgue-type inequalities for the WOMP and theory of the universal sampling dicretization -- in order to obtain new results in sampling recovery.
		}
\end{abstract}

{\it Keywords and phrases}: Sampling discretization, universality, recovery.

{\it MSC classification 2000:} Primary 65J05; Secondary 42A05, 65D30, 41A63.

\section{Introduction}
\label{I}

This paper is a followup of the recent paper \cite{VT202}. We formulate here the results from \cite{VT202}, which we use in our analysis. The reader can find a discussion of the previous results directly 
connected with the results from \cite{VT202} and from this paper in \cite{VT202}. Results on discretization can be found in the recent survey papers \cite{DPTT} and \cite{KKLT}. 

 We begin with a brief 
description of  some  necessary concepts on 
sparse approximation.   Let $X$ be a Banach space with norm $\|\cdot\|:=\|\cdot\|_X$, and let $\D=\{g_i\}_{i=1}^\infty $ be a given (countable)  system of elements in $X$. Given a finite subset $J\subset \NN$, we define $V_J(\D):=\spn\{g_j:\  \ j\in J\}$. 
For a positive  integer $ v$, we denote by $\mathcal{X}_v(\D)$ the collection of all linear spaces $V_J(\D)$  with   $|J|=v$, and 
denote by $\Sigma_v(\D)$  the set of all $v$-term approximants with respect to $\D$; that is, 
$
\Sigma_v(\D):= \bigcup_{V\in\cX_v(\D)} V.
$
Given $f\in X$,  we define
$$
\sigma_v(f,\D)_X := \inf_{g\in\Sigma_v(\D)}\|f-g\|_X,\  \ v=1,2,\cdots.
$$ 
Moreover,   for a function class $\bF\subset X$, we define 
$$
 \sigma_v(\bF,\D)_X := \sup_{f\in\bF} \sigma_v(f,\D)_X,\quad  \sigma_0(\bF,\D)_X := \sup_{f\in\bF} \|f\|_X.
 $$
 
 In this paper we consider the case $X=L_p(\Omega,\mu)$, $1\le p<\infty$. More precisely, let $\Omega$ be a compact subset of $\R^d$ with the probability measure $\mu$ on it. By the $L_p$ norm, $1\le p< \infty$, of the complex valued function defined on $\Omega$,  we understand
$$
\|f\|_p:=\|f\|_{L_p(\Omega,\mu)} := \left(\int_\Omega |f|^pd\mu\right)^{1/p}.
$$
In the case $X=L_p(\Omega,\mu)$ we sometimes write for brevity $\sigma_v(\cdot,\cdot)_p$ instead of 
$\sigma_v(\cdot,\cdot)_{L_p(\Omega,\mu)}$.

We study systems, which have  the universal sampling discretization property. 
 
  \begin{Definition}\label{ID1} Let $1\le p<\infty$. We say that a set $\xi:= \{\xi^j\}_{j=1}^m \subset \Omega $ provides {\it  $L_p$-universal sampling discretization}   for the collection $\cX:= \{X(n)\}_{n=1}^k$ of finite-dimensional  linear subspaces $X(n)$ if we have
 \be\label{ud}
\frac{1}{2}\|f\|_p^p \le \frac{1}{m} \sum_{j=1}^m |f(\xi^j)|^p\le \frac{3}{2}\|f\|_p^p\quad \text{for any}\quad f\in \bigcup_{n=1}^k X(n) .
\ee
We denote by $m(\cX,p)$ the minimal $m$ such that there exists a set $\xi$ of $m$ points, which
provides  the $L_p$-universal sampling discretization (\ref{ud}) for the collection $\cX$. 

We will use a brief form $L_p$-usd for the $L_p$-universal sampling discretization (\ref{ud}).
\end{Definition}

We use $C$, $C'$ and $c$, $c'$ to denote various positive constants. Their arguments indicate the parameters, which they may depend on. Normally, these constants do not depend on a function $f$ and running parameters $m$, $v$, $u$. We use the following symbols for brevity. For two nonnegative sequences $a=\{a_n\}_{n=1}^\infty$ and $b=\{b_n\}_{n=1}^\infty$ the relation $a_n\ll b_n$ means that there is  a number $C(a,b)$ such that for all $n$ we have $a_n\le C(a,b)b_n$. Relation $a_n\gg b_n$ means that 
 $b_n\ll a_n$ and $a_n\asymp b_n$ means that $a_n\ll b_n$ and $a_n \gg b_n$. 
 For a real number $x$ denote $[x]$ the integer part of $x$, $\lceil x\rceil$ -- the smallest integer, which is 
 greater than or equal to $x$.

 Denote for a set $\xi$ of $m$ points from $\Omega$
\be\label{muxi}
\Og_m:=\xi:=\{\xi^1,\cdots, \xi^m\}\subset \Omega,\   \  \mu_m:= \frac1{m}\sum_{j=1}^m \delta_{\xi^j},\  \  \ \text{and}\  \ \mu_\xi := \f{\mu+\mu_m}2,
\ee
where $\delta_\bx$ denotes the Dirac measure supported at a point $\bx$.

 	For convenience, we will use the notation $\|\cdot\|_{L_p(\nu)}$  to denote the norm of $L_p$ defined with respect to a measure $\nu$ on $\Og$. Sometimes we use the notation $\|\cdot\|_{p}$. 
	
The main goal of this paper is to study optimal recovery on specific classes of multivariate functions. 	
 	 
 For a function class $\bF\subset \cC(\Omega)$,  we  define (see \cite{TWW})
$$
\varrho_m^o(\bF,L_p) := \inf_{\xi } \inf_{\cM} \sup_{f\in \bF}\|f-\cM(f(\xi^1),\dots,f(\xi^m))\|_p,
$$
where $\cM$ ranges over all  mappings $\cM : \bbC^m \to   L_p(\Omega,\mu)$  and
$\xi$ ranges over all subsets $\{\xi^1,\cdots,\xi^m\}$ of $m$ points in $\Og$. 
Here, we use the index {\it o} to mean optimality.  

In \cite{VT202} we studied the behavior of $\varrho_m^o(\bF,L_p)$ for a new kind of classes -- 
$\bA^r_\bt(\Psi)$. We obtained  the order of $\varrho_m^o(\bA^r_\bt(\Psi),L_p)$ (up to a logarithmic in $m$ factor) in the case $1\le p\le 2$ for the class of uniformly bounded orthonormal systems $\Psi$.
We give a definition of classes $\bA^r_\bt(\Psi)$.  For a given $1\le p\le \infty$,
let $\Psi =\{\psi_{\bk}\}_{\bk \in \Z^d}$, $\psi_\bk \in \cC(\Omega)$, $\|\psi_\bk\|_p \le B$, $\bk\in\Z^d$,  be a system in the space $L_p(\Omega,\mu)$. We consider functions representable in the form of an  absolutely convergent series
\be\label{Irepr}
f = \sum_{\bk\in\Z^d} a_\bk(f)\psi_\bk,\quad \sum_{\bk\in\Z^d} |a_\bk(f)|<\infty.
\ee
For $\bt \in (0,1]$ and $r>0$ consider the following class $\bA^r_\bt(\Psi)$ of functions $f$, which have representations (\ref{Irepr}) satisfying the following conditions
\be\label{IAr}
  \left(\sum_{[2^{j-1}]\le \|\bk\|_\infty <2^j} |a_\bk(f)|^\bt\right)^{1/\bt} \le 2^{-rj},\quad j=0,1,\dots  .
\ee

In the special case, when $\Psi$ is the trigonometric system $\Tr^d := \{e^{i(\bk,\bx)}\}_{\bk\in \Z^d}$ we proved in \cite{VT202} (see Theorem 4.6 there) the following lower bound  
\be\label{Arlb}
\varrho_m^o(\bA^r_\bt(\Tr^d),L_1) \gg m^{1/2-1/\bt-r/d}.
\ee 
 In \cite{VT202} we complemented this lower bound by the following upper bound. 
 Assume that $\Psi$ is a uniformly bounded $\|\psi_\bk\|_\infty \le B$, $\bk\in\Z^d$, orthonormal system.
Let $1\le p\le 2$, $\bt \in (0,1]$, and $r>0$, then
$$
 \varrho_{m}^{o}(\bA^r_\bt(\Psi),L_p(\Omega,\mu))\le  \varrho_{m}^{o}(\bA^r_\bt(\Psi),L_2(\Omega,\mu))
$$
\be\label{Arub}
  \ll  (m(\log(m+1))^{-5})^{1/2 -1/\bt-r/d} .  
\ee
It is noted in \cite{VT202} that $(\log(m+1))^{-5}$ in (\ref{Arub}) can be replaced by $(\log(m+1))^{-4}$. 
Bounds (\ref{Arlb}) and (\ref{Arub}) show that generally for uniformly bounded orthonormal systems $\Psi$, for instance for the trigonometric system $\Tr^d$, the gap between the upper and the lower bounds is 
in terms of an extra logarithmic in $m$ factor.  

In this paper alike the paper \cite{VT202} we study a special case, when $\Psi$   satisfies certain restrictions. We formulate these restrictions in the form of conditions imposed on $\Psi$.

{\bf Condition A.} Assume that $\Psi$ is a uniformly bounded system. Namely, we assume that $\Psi:=\{\ff_j\}_{j=1}^\infty$ is a system of uniformly bounded functions on $\Og \subset \R^d$ such that
\be \label{ub}
\sup_{\bx\in\Og} |\vi_j(\bx)|\leq 1,\   \ 1\leq j<\infty.
\ee

{\bf Condition B1.} Assume that  $\Psi $ is an orthonormal system.

{\bf Condition B2.} Assume that  $ \Psi $ is a Riesz system, i.e.
for any $N\in\N$ and any $(a_1,\cdots, a_N) \in\bbC^N,$
\begin{equation}\label{Riesz}
R_1 \left( \sum_{j=1}^N |a_j|^2\right)^{1/2} \le \left\|\sum_{j=1}^N a_j\ff_j\right\|_2 \le R_2 \left( \sum_{j=1}^N |a_j|^2\right)^{1/2},
\end{equation}
where $0< R_1 \le R_2 <\infty$.  

{\bf Condition B3.} Assume that  $\Psi$ is a Bessel system, i.e. there exists a constant $K>0$ such that for any $N\in\N$ and   for any $(a_1,\cdots, a_N) \in\bbC^N,$
\begin{equation}\label{Bessel}
  \sum_{j=1}^N |a_j|^2 \le K  \left\|\sum_{j=1}^N a_j\ff_j\right\|^2_2 .
\end{equation}

Clearly, Condition B1 implies Condition B2 with $R_1=R_2=1$. Condition B2 implies Condition B3 with 
$K=R_1^{-2}$.

In the case of $2<p<\infty$ the following upper bounds were proved in \cite{VT202} (see Theorem 4.2.  
and Remark 6.1 there).

 \begin{Theorem}[{\cite{VT202}}]\label{IT1}  Assume that $\Psi$ is a uniformly bounded Riesz system (in the space  $L_2(\Omega,\mu)$) satisfying (\ref{ub}) and \eqref{Riesz} for some constants $0<R_1\leq R_2<\infty$.
Let $2<p<\infty$, $\bt \in (0,1]$, and $r>0$. 
 There exist constants $c'=c'(r,\bt,p,R_1,R_2,d)$ and $C'=C'(r,\bt,p,d)$ such that       for any $v\in\N$ we have the bound   
 \begin{equation}\label{R4}
 \varrho_{m}^{o}(\bA^r_\bt(\Psi),L_p(\Omega,\mu)) \le C'  v^{1/2 -1/\bt -r/d}   
\end{equation}
for any $m$ satisfying 
$$
m\ge c'    v^{p/2} (\log v)^{2}.
$$
\end{Theorem}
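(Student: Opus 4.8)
The plan is to use that $\varrho_m^o$ is an infimum over both the sampling set and the recovery map, so it suffices to exhibit one point set $\xi=\{\xi^j\}_{j=1}^m$ and one map $\cM$ for which $\sup_{f\in\bA^r_\bt(\Psi)}\|f-\cM(f(\xi^1),\dots,f(\xi^m))\|_p$ has the stated order. First I would fix the points. Set $V:=\lceil cv\rceil$ and apply the universal sampling discretization machinery to the collection $\cX_V(\Psi)$ of all $V$-dimensional coordinate subspaces $V_J(\Psi)$, $|J|=V$. Since $\Psi$ is uniformly bounded and Riesz, every $h\in V_J(\Psi)$ satisfies the Nikol'skii-type inequality $\|h\|_\infty\le C V^{1/2}\|h\|_2$; inserting this $L_2\to L_\infty$ constant $\asymp V^{1/2}$ into the known thresholds for $L_p$-universal discretization with $p>2$ is exactly what forces $m\gg V^{p/2}(\log V)^2\asymp v^{p/2}(\log v)^2$, and at the same time it supplies the $L_2$-usd that the greedy step needs. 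I would keep both the $L_2$-usd and the $L_p$-usd of $\xi$ on $\cX_V(\Psi)$ and work with the measure $\mu_\xi=(\mu+\mu_m)/2$ of (\ref{muxi}), on which discrete and continuous norms are two-sided comparable.

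The approximation input is read off directly from (\ref{IAr}). Counting, in each dyadic block of cardinality $\asymp 2^{dj}$, the coefficients that exceed a given level and using $\bigl(\sum_{[2^{j-1}]\le\|\bk\|_\infty<2^j}|a_\bk(f)|^\bt\bigr)^{1/\bt}\le 2^{-rj}$ yields the rearrangement bound $a_n^*(f)\ll n^{-1/\bt-r/d}$ for the nonincreasing reordering of $\{|a_\bk(f)|\}$. As $\bt\le1$ gives $1/\bt+r/d>1/2$, summing the square tail and invoking the Riesz property produces the nonlinear approximation estimate $\sigma_v(f,\Psi)_2\ll v^{1/2-1/\bt-r/d}$ uniformly on the class; this is the benchmark the reconstruction must meet.

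For the reconstruction I would run the WOMP in the discrete space $L_2(\mu_m)$ on the data $\{f(\xi^j)\}$, stop after $\asymp v$ steps, and output the resulting $\le cv$-term expansion $G=\cM(f(\xi))$. The Lebesgue-type inequality for the WOMP applies because the $L_2$-usd turns $\Psi$ into a system with controlled $\ell_2^m$-coherence on the relevant supports, so $\|f-G\|_{L_2(\mu_m)}\ll\sigma_v(f,\Psi)_{L_2(\mu_m)}$; transporting both sides to the continuous $L_2$ norm through the $L_2$-usd, after truncating the tail of $f$ at block level, gives $\|f-G\|_2\ll\sigma_v(f,\Psi)_2\ll v^{1/2-1/\bt-r/d}$.

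\textbf{The main obstacle} is to upgrade this $L_2$ guarantee to an $L_p$ bound of the \emph{same} order $v^{1/2-1/\bt-r/d}$. Splitting $f-G=(f-f_v)+(f_v-G)$, with $f_v$ the best $v$-term $L_2$ approximant, the part $f_v-G$ lies in a subspace of dimension $\le cv$: a bare $L_2\to L_p$ Nikol'skii step here would cost a factor $v^{1/2-1/p}$ and overshoot, while the raw tail $\|f-f_v\|_p$ can be as large as $v^{1-1/p-1/\bt-r/d}$ for a function whose spectrum sits in a single block. This is precisely the point where the oversampling is indispensable: such block-concentrated functions are spatially peaked but low-dimensional, hence are resolved by the $m\gg v^{p/2}$ samples, whereas for the complementary functions the residual is spatially spread and its $L_p$ norm is comparable to its $L_2$ norm. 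I would therefore bound $\|f-G\|_p$ through the $L_p$-usd rather than through Nikol'skii: on the $\le cv$-dimensional subspace carrying $f_v-G$ the discrete $\ell_p^m$ norm reproduces the continuous $L_p$ norm, so $\|f_v-G\|_p$ is governed by the discrete residual already controlled by the WOMP, and the tail is handled by splitting it into a block-truncation estimated via the $L_p$-usd and a far tail dominated by its $\ell_1$-sum through uniform boundedness. Balancing these contributions against the decay $a_n^*(f)\ll n^{-1/\bt-r/d}$ is the delicate step that pins down the rate $v^{1/2-1/\bt-r/d}$ and the constants $c'$, $C'$.
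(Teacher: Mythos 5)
Your choice of point set and the source of the threshold $m \gg v^{p/2}(\log v)^{2}$ (the $L_p$-universal discretization, Theorem \ref{IT2}) match the paper's proof, and your rearrangement argument for $\sigma_v(f,\Psi)_{2} \ll v^{1/2-1/\bt-r/d}$ is fine. The genuine gap is exactly at the step you flag as the main obstacle: your recovery map is the WOMP run in the discrete $L_2(\Omega_m,\mu_m)$ norm, so the only quantitative information it produces is control of the \emph{discrete $L_2$} residual, $\|f-G\|_{L_2(\mu_m)} \ll \sigma_v(f,\Psi)_{L_2(\mu_m)}$. The $L_p$-usd cannot convert this into the desired $L_p$ bound: it equates discrete and continuous $L_p$ norms of sparse elements, but you still must pass from discrete $L_2$ to discrete $L_p$ on the $\sim v$-dimensional space carrying $f_v-G$, and for a uniformly bounded Riesz system that passage costs precisely the Nikol'skii factor: $\|h\|_{L_p(\mu_m)} \le \|h\|_\infty^{1-2/p}\|h\|_{L_2(\mu_m)}^{2/p}$ together with $\|h\|_\infty \ll v^{1/2}\|h\|_2$ and the $L_2$-usd gives a loss $\asymp v^{1/2-1/p}$, and nothing better holds in general. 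Consequently any scheme that exploits only $L_2$-type residual control lands on the rate $v^{1-1/p-1/\bt-r/d}$ --- which is exactly what this paper's Theorem \ref{NUT1} and Corollary \ref{RC1} obtain, and with far fewer points, $m \gg v(\log (2v))^4$ --- not on the claimed rate $v^{1/2-1/\bt-r/d}$. Your heuristic that spatially spread residuals have comparable $L_2$ and $L_p$ norms has no rigorous counterpart: the WOMP residual need not be spread, and the block-concentrated examples you mention are not excluded by anything in your argument.

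The paper's proof (in \cite{VT202}, as signalled here by the roles of Theorem \ref{IT2} and Theorem \ref{NUT2}) avoids this by never leaving $L_p$: the recovery map is the best $v$-term approximation $B_v(f,\D_N,L_p(\xi))$ computed in the \emph{discrete} $L_p(\xi)$ norm, for which the one-sided $L_p$-usd yields the Lebesgue-type inequality (\ref{D7}), $\|f-B_v(f,\D_N,L_p(\xi))\|_{L_p(\Omega,\mu)} \ll \sigma_v(f,\D_N)_{L_p(\Omega,\mu_\xi)}$. The second ingredient you are missing is Theorem \ref{RT1} applied with the measure $\mu_\xi$: for $p\ge 2$ the best $v$-term approximation of the class in $L_p$ already has the rate $v^{1/2-1/\bt-r/d}$ (the same exponent as in $L_2$, since $p^*=\min\{p,2\}=2$); this rests on greedy approximation in uniformly smooth spaces --- $L_p$ with $p\ge 2$ has modulus of smoothness of power type $2$, cf. (\ref{CG2}) and (\ref{R1}) --- and not on the Riesz property and rearrangements, which only give the $L_2$ statement. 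With these two replacements (an $L_p(\xi)$-based recovery map instead of the $L_2(\mu_m)$-WOMP, and the $L_p$ class-approximation rate instead of the $L_2$ one) your outline becomes the paper's proof; without them the $L_2\to L_p$ upgrade fails.
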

In the proof of Theorem \ref{IT1} known results about universal $L_p$-discretization played an important 
role. We now formulate the corresponding result. 

	\begin{Theorem}[{\cite{DT}}]\label{IT2}   Assume that $\D_N=\{\ff_j\}_{j=1}^N$ is a uniformly bounded Riesz system  satisfying (\ref{ub}) and \eqref{Riesz} for some constants $0<R_1\leq R_2<\infty$.
		Let $2<p<\infty$ and let  $1\leq u\leq N$ be an integer. 		
		 Then for a large enough constant $C=C(p,R_1,R_2)$ and any $\va\in (0, 1)$,   there exist 
		$m$ points  $\xi^1,\cdots, \xi^m\in  \Og$  with 
\be\label{m}
		m\leq C\va^{-7}       u^{p/2} (\log N)^2,
\ee
			such that for any $f\in  \Sigma_u(\D_N)$, 
		\[ (1-\va) \|f\|_p^p \leq \frac   1m \sum_{j=1}^m |f(\xi^j)|^p\leq (1+\va) \|f\|_p^p. \]	
	\end{Theorem}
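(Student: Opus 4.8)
The plan is to prove the two-sided bound by \emph{random sampling}: draw the points $\xi^1,\dots,\xi^m$ independently according to the probability measure $\mu$, and show that with positive probability the empirical functional $f\mapsto \frac1m\sum_{j=1}^m|f(\xi^j)|^p$ lies within a factor $1\pm\va$ of $\|f\|_p^p$ simultaneously for every $f\in\Sigma_u(\D_N)$. Since both sides of the asserted inequality are homogeneous of degree $p$ in $f$, it suffices to control the empirical process on the unit $L_p$-sphere of each of the $u$-dimensional subspaces $V_J(\D_N)$, $|J|=u$, whose union is $\Sigma_u(\D_N)$.

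First I would record the Nikol'skii-type inequality forced by Conditions A and B2. If $f=\sum_{j\in J}c_j\ff_j$ with $|J|=u$, then by $\|\ff_j\|_\infty\le1$, the Cauchy--Schwarz inequality, and the lower Riesz bound \eqref{Riesz},
\[
\|f\|_\infty\le\sum_{j\in J}|c_j|\le u^{1/2}\Big(\sum_{j\in J}|c_j|^2\Big)^{1/2}\le R_1^{-1}u^{1/2}\|f\|_2\le R_1^{-1}u^{1/2}\|f\|_p,
\]
the last step using $\|f\|_2\le\|f\|_p$ for $p\ge2$ on the probability space $(\Og,\mu)$. Hence, on the unit $L_p$-sphere of any such subspace, the random variables $h_f(\xi):=|f(\xi)|^p$ satisfy $\bE h_f=1$, are bounded by $M:=R_1^{-p}u^{p/2}$, and obey $\bE h_f^2\le\|f\|_\infty^p\|f\|_p^p\le M$. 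The single factor $u^{p/2}$ in the final bound on $m$ is precisely this quantity $M$.

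Next, for a \emph{fixed} $f$ a Bernstein inequality gives $\bP\big(|\frac1m\sum_j h_f(\xi^j)-1|\ge\va\big)\le 2\exp(-c\,m\va^2/M)$, so the pointwise deviation is controlled once $m$ exceeds $M\va^{-2}$ times a complexity term. To upgrade this to a bound uniform over all of $\Sigma_u(\D_N)$ I would treat $\{h_f:\ f\in\Sigma_u(\D_N),\ \|f\|_p=1\}$ as the index class of a bounded empirical process and combine two ingredients: Talagrand's concentration inequality, which shows the supremum deviation concentrates around its mean with the variance and sup parameters $M$ identified above, and a chaining (Dudley-type) estimate for that mean. The metric entropy of the index class at scale $\eta$ is of order $u\log(N/\eta)$ — the factor $u$ from the dimension of each subspace and the factor $\log N$ from the $\binom{N}{u}\le N^u$ choices of $J$ — and it is exactly the use of the Nikol'skii bound inside the chaining, rather than a crude net count, that keeps the power of $u$ equal to $u^{p/2}$. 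Balancing the concentration term against the chaining terms and the entropy yields a threshold of the form $m\ge C\va^{-7}u^{p/2}(\log N)^2$ beyond which the uniform deviation is below $\va$ with positive probability, establishing existence of the required point set.

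The hard part is the uniform, simultaneous control over the exponentially many subspaces composing $\Sigma_u(\D_N)$. Three things must be tracked at once: (i) the sharp power $u^{p/2}$, which comes from the Nikol'skii ratio $\|f\|_\infty^p/\|f\|_p^p\ll u^{p/2}$ and must survive the passage from a fixed $f$ to the supremum; (ii) the clean $(\log N)^2$, with one logarithm entering through the entropy $u\log N$ of the index class and the second through the chaining integral and the sub-exponential tail correction; and (iii) the dependence on $\va$, whose explicit exponent $7$ emerges only after carefully optimizing the interplay between the Bernstein/Talagrand concentration and the chaining estimates. Achieving all three simultaneously, rather than the easier but lossy bound $u^{p/2+1}$ that a single-scale net would produce, is the crux of the argument.
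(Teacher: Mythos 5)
This theorem is imported from \cite{DT} rather than proved in the present paper, but your plan coincides in essence with the proof given there: i.i.d.\ sampling from $\mu$, reduction by homogeneity to the unit $L_p$-sphere of $\Sigma_u(\D_N)$, the Nikol'skii-type bound $\|f\|_\infty\le R_1^{-1}u^{1/2}\|f\|_p$ forced by uniform boundedness plus the lower Riesz bound, and a chaining/entropy argument (log-covering numbers $\asymp u\log N$ for the sparse class) combined with Bernstein-type concentration, which is exactly how the factors $u^{p/2}$, $(\log N)^2$ and the unoptimized $\va^{-7}$ arise in \cite{DT}. The only real difference is cosmetic: the original argument runs the chaining by hand through a conditional discretization theorem stated in terms of entropy numbers in the uniform norm, rather than invoking Talagrand's concentration inequality for the empirical process.
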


For a uniformly bounded Riesz system $\Psi$ Theorem \ref{IT1} gives the following upper bound 
\begin{equation}\label{R4n}
 \varrho_{m}^{o}(\bA^r_\bt(\Psi),L_p(\Omega,\mu)) \ll  \left(\frac{m}{(\log m)^2}\right)^{(2/p)(1/2 -1/\bt -r/d)} .  
\end{equation}

In this paper we improve (\ref{R4n}). For a uniformly bounded Riesz system $\Psi$ 
Corollary \ref{RC1} (see below) gives for $2\le p<\infty$
\begin{equation}\label{I12}
 \varrho_{m}^{o}(\bA^r_\bt(\Psi),L_p(\Omega,\mu)) \ll  \left(\frac{m}{(\log m)^4}\right)^{1-1/p -1/\bt -r/d} .  
\end{equation}
Moreover, the bound (\ref{I12}) is provided by the simple greedy algorithm -- Weak Orthogonal Matching Pursuit (see the definition below).  The upper bound (\ref{R9}) and the lower bound from Proposition \ref{RP1} give the following inequalities in the special case of the $d$-variate trigonometric system $\Tr^d$
\begin{equation}\label{I13}
 m^{1-1/p -1/\bt -r/d}\ll \varrho_{m}^{o}(\bA^r_\bt(\Tr^d),L_p) \ll  \left(\frac{m}{(\log m)^3}\right)^{1-1/p -1/\bt -r/d} .  
\end{equation}

Further discussion can be found in Section \ref{Di}. 

\section{Preliminaries}
\label{P}

The  Weak Orthogonal Matching Pursuit (WOMP)  is a  greedy algorithm defined with respect to a given dictionary (system) $\D=\{g\}$  in a  Hilbert space  equipped with the inner product $ \<\cdot,\cdot\>$ and the norm  $\|\cdot\|_H$.   It is also known
  under the name Weak Orthogonal Greedy Algorithm (see, for instance, \cite{VTbook}).\\

{\bf Weak Orthogonal Matching Pursuit (WOMP).} Let $\D=\{g\}$ be a system of  nonzero elements in $H$ such that $\|g\|_H\le 1$.  
 Let $\tau:=\{t_k\}_{k=1}^\infty\subset [0, 1]$ be a given  sequence of weakness parameters. 
Given  $f_0\in H$, we define  a sequence  $\{f_k\}_{k=1}^\infty\subset H$ of residuals  for $k=1,2,\cdots$  inductively  as follows: 
\begin{enumerate}[\rm (1)]
	\item 
 $ g_k\in \D$  is any  element  satisfying
$$
|\langle f_{k-1},g_k\rangle | \ge t_k
\sup_{g\in \D} |\langle f_{k-1},g\rangle |.
$$
 
\item  Let  $H_k := \sp \{g_1,\dots,g_k\}$, and define 
$G_k(\cdot, \CD)$ to be the orthogonal projection operator from $H$   onto the space $H_k$ .

\item   Define the residual after the $k$th iteration of the algorithm by
\begin{equation*}
f_k := f_0-G_k(f_0, \D).
\end{equation*}
\end{enumerate}

In the case when $t_k=1$ for $k=1,2,\dots$,   WOMP is called the Orthogonal
Matching Pursuit (OMP). In this paper we only consider the case when  $t_k=t\in (0, 1]$ for $k=1,2,\dots$.  The term {\it weak} in the definition of the WOMP means that at step (1) we do not shoot for the optimal element of the dictionary which attains the corresponding supremum. The obvious reason for this is that we do not know in general if the optimal element exists. Another practical reason is that the weaker the assumption is, the easier it is to realize in practice. Clearly, $g_k$ may not be unique.  However, all the  results formulated  below  are independent of the choice of the $g_k$.

For the sake of convenience in later applications, we will use the notation  $\text{WOMP}\bigl(\D; t\bigr)_H$ to   denote the WOMP   defined with respect to  a given weakness parameter $t\in (0, 1]$ and   a system $\CD$ in a Hilbert  space $H$.

 {\bf UP($u,D$). ($u,D$)-unconditional property.}   We say that a system $\D=\{\vi_i\}_{i\in I}$ of  elements 
 in a Hilbert space $H=(H, \|\cdot\|)$ 
  is ($u,D$)-unconditional with  constant $U>0$ for some integers $1\leq u\leq D$ if for any 
   $f=\sum_{i\in A} c_i \vi_i\in \Sigma_u(\D)$ with  $A\subset I$    and  $J\subset I\setminus A$ such that   $|A|+|J| \le D$,  we have
\be\label{UP}
\Bl\|\sum_{i\in A} c_i \vi_i\Br\|\leq U\inf_{g\in V_J(\D)}\Bl \|\sum_{i\in A} c_i \vi_i-g\Br\|,
\ee
where $ V_J(\CD):=\spn\{\vi_i:\  \ i\in J\}$.

We gave the above definition  for a countable (or finite) system $\D$. Similar definition can be given for any system $\D$ as well.

 \begin{Theorem}[{\cite[Corollary I.1]{LT}}]\label{PT1} Let $\CD$ be a dictionary in a Hilbert space $H=(H, \|\cdot\|)$ having  the property  {\bf UP($u,D$)}  with   constant $U>0$ for some  integers $1\leq u\leq D$.   Let $f_0\in H$, and let $t\in (0, 1]$ be a given weakness parameter. 
  Then there exists a positive constant $c_\ast:=c(t,U)$  depending only on $t$ and  $U$ such that  the $\text{WOMP}\bigl(\D; t\bigr)_H$ applied to  $f_0$ gives
  $$
  \left\|f_{\left \lceil{c_\ast v}\right\rceil} \right\| \le C\sigma_v(f_0,\D)_H,\  \ v=1,2,\cdots, \min\Bl\{u,  [ D/{(1+c_\ast)]}\Br\},
  $$
   where  $C>1$ is an absolute constant, and $f_k$ denotes the residual of $f_0$ after the $k$-th iteration of the algorithm.

\end{Theorem}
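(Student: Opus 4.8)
The plan is to run the standard ``greedy makes progress'' scheme for $\text{WOMP}(\D;t)_H$ and then to control the number of iterations by the sparsity $v$ alone. Fix $v\le\min\{u,[D/(1+c_\ast)]\}$, set $a:=\sigma_v(f_0,\D)_H$, and choose a near-best approximant $f^\ast=\sum_{i\in A}c_i\varphi_i$ with $|A|\le v$ and $\|f_0-f^\ast\|\le(1+\eta)a$ for a small fixed $\eta$; write $h:=f_0-f^\ast$. Since the residuals are orthogonal projections of $f_0$ onto the increasing spaces $H_k$, the sequence $\|f_k\|$ is nonincreasing, and the restriction $v\le[D/(1+c_\ast)]$ guarantees that at every step $k\le\lceil c_\ast v\rceil$ the set $A$ together with $\La_{k-1}$, the index set of the chosen atoms $g_1,\dots,g_{k-1}$, has cardinality $\le D$. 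Thus $\mathbf{UP}(u,D)$ is available throughout, and $v\le u$ keeps the relevant elements in $\Sigma_u(\D)$.

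Two elementary ingredients come first. Since $H_k\ni g_k$, $\|g_k\|\le1$, and $|\langle f_{k-1},g_k\rangle|\ge t\sup_{g\in\D}|\langle f_{k-1},g\rangle|$, one step decreases the energy by
\[
\|f_{k-1}\|^2-\|f_k\|^2\ \ge\ |\langle f_{k-1},g_k\rangle|^2\ \ge\ t^2\Bigl(\sup_{g\in\D}|\langle f_{k-1},g\rangle|\Bigr)^2 .
\]
Next I would prove a correlation lower bound $\sup_{g\in\D}|\langle f_{k-1},g\rangle|\ge c_1\|f_{k-1}\|/\sqrt v$, valid while $\|f_{k-1}\|\ge 2a$. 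Indeed $\langle f_{k-1},f_0\rangle=\|f_{k-1}\|^2$ (as $f_{k-1}\perp H_{k-1}$), whence $\langle f_{k-1},f^\ast\rangle\ge\|f_{k-1}\|^2-a\|f_{k-1}\|$. Put $B:=A\setminus\La_{k-1}$ and $\psi_B:=\sum_{i\in B}c_i\varphi_i$; because $\langle f_{k-1},\varphi_i\rangle=0$ for $i\in A\cap\La_{k-1}$, we have $\langle f_{k-1},f^\ast\rangle=\langle f_{k-1},\psi_B\rangle$. The component of $\psi_B$ orthogonal to $H_{k-1}$ equals that of $f^\ast$, namely $f_{k-1}-(h-G_{k-1}(h,\D))$, so it has norm $\le\|f_{k-1}\|+a$; applying $\mathbf{UP}(u,D)$ to $\psi_B$ (support $B$, forbidden set $\La_{k-1}$, which is disjoint from $B$) gives $\|\psi_B\|\le U(\|f_{k-1}\|+a)$. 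Combining $\langle f_{k-1},\psi_B\rangle\ge\tfrac12\|f_{k-1}\|^2$ in the regime $\|f_{k-1}\|\ge2a$ with Cauchy--Schwarz, $|B|\le v$, and the near-orthogonality of the system to pass from $\psi_B$ to the $\ell_2$-mass of its coefficients, yields the claimed bound.

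The main obstacle is the counting of iterations. The two ingredients give, while $\|f_{k-1}\|\ge2a$, geometric decay $\|f_k\|^2\le(1-c_2t^2/v)\|f_{k-1}\|^2$; but iterating this only shows $\|f_n\|\le2a$ after $n\asymp v\,t^{-2}\log(\|f_0\|/a)$ steps, i.e. it carries a spurious factor depending on the dynamic range $\|f_0\|/\sigma_v$. Removing this factor, so as to reach the clean bound $\lceil c_\ast v\rceil$ with $c_\ast=c_\ast(t,U)$, is the real content of the theorem. My plan is a layer-cake argument over the coefficient scales of $f^\ast$: order $|c_{i_1}|\ge\cdots\ge|c_{i_v}|$ and show, using the per-step progress together with $\mathbf{UP}(u,D)$, that the algorithm needs only a bounded number $B_0=B_0(t,U)$ of additional iterations to drive the residual from the level of $\bigl\|\sum_{l>j}c_{i_l}\varphi_{i_l}\bigr\|+a$ down to that of $\bigl\|\sum_{l>j+1}c_{i_l}\varphi_{i_l}\bigr\|+a$. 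Summing over $j=0,\dots,v-1$ spends $O(1)$ iterations per coefficient rather than $O(1)$ per halving of the residual, so the total number of steps is $\le B_0 v$ independently of $\|f_0\|/a$; one then sets $c_\ast:=B_0$ and takes $C$ absolute.

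I expect the decisive difficulty to be exactly this last, log-removing step: one must show that capturing the next dominant coefficient direction costs only boundedly many greedy selections even though the atoms are not orthogonal, and this is where $\mathbf{UP}(u,D)$ does the real work, ensuring both that the already-selected directions cannot mask a target atom and that coefficient mass is controlled by norm. By comparison, the per-step progress and the correlation bound of the previous paragraph are routine. A secondary bookkeeping point is to verify $|A|+|\La_{k-1}|\le D$ for all $k\le\lceil c_\ast v\rceil$, which is precisely what the hypothesis $v\le[D/(1+c_\ast)]$ provides.
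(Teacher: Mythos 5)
First, a point of reference: the paper does not prove this statement at all. Theorem \ref{PT1} is imported verbatim from \cite[Corollary I.1]{LT}, so your proposal has to be judged against the Livshitz--Temlyakov argument, not against anything in this paper's text.

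Your two ``routine'' ingredients are essentially sound: the per-step energy drop, and the correlation bound built from $\langle f_{k-1},f_0\rangle=\|f_{k-1}\|^2$, the identity that $\psi_B$ and $f^\ast$ have the same component orthogonal to $H_{k-1}$, and {\bf UP($u,D$)} applied with forbidden set $\La_{k-1}$. One caveat even here: the ``near-orthogonality of the system'' that you invoke to pass from $\|\psi_B\|$ to the $\ell_2$-mass of the coefficients is \emph{not} a hypothesis; UP is the only assumption. This step is fillable --- UP applied to the subsums $\psi_{B_+}$, $\psi_{B_-}$ over a sign pattern gives sign-unconditionality with constant $2U$, and averaging over random signs then yields $\sum_{i\in B}|c_i|^2\|\vi_i\|^2\le 4U^2\|\psi_B\|^2$ --- but you neither prove it nor flag that it requires proof from UP alone.

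The genuine gap is the one you yourself identify and then defer: the iteration count. Your layer-cake plan --- a bounded number $B_0(t,U)$ of iterations per coefficient level --- does not follow from the ingredients you established, and the natural attempt to verify it fails. Your per-step guarantee gives an energy drop of order $t^2\|f_{k-1}\|^2/(CU^4|B_k|)$, where $B_k=A\setminus\La_{k-1}$, and a priori $|B_k|$ can remain of size $v$, since the weak selection rule permits WOMP to choose atoms outside $A$. Test the claim on $v$ equal coefficients in an orthonormal system: moving from the level with two remaining coefficients to the level with one requires an energy drop equal to a constant fraction of the square of the target level, so your guaranteed progress with $|B_k|\le v$ yields $O(v/t^2)$ iterations for that \emph{single} level, not $B_0$. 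To rescue the per-level claim you would have to show that the remaining target support shrinks in step with the levels, which is precisely what wrong selections prevent you from asserting level-by-level. The actual proof (Zhang's argument for OMP under RIP, adapted by Livshitz and Temlyakov to the UP setting) avoids per-level claims entirely: it is an amortized block analysis, in which the coefficients of $f^\ast$ are grouped into blocks by magnitude, the correlation bound is applied with target $\psi_{B'}$ for the current block $B'$ only --- so the sparsity parameter is $|B'|$ rather than $v$, and the tail $\|\psi_{B\setminus B'}\|$ enters as an additive error alongside $\sigma_v$ --- and one proves that a block of size $n$ costs $O(n/t^2)$ iterations to drive the residual down to a constant times (tail norm $+\ \sigma_v$), after which summation over blocks gives the total count $\lceil c_\ast v\rceil$. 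That bookkeeping, not the per-step estimates, is the content of the theorem, and your proposal does not contain it.
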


  We will  consider the Hilbert space $L_2(\Omega_m,\mu_m)$ 
 instead of  $L_2(\Omega,\mu)$,  
where $\Omega_m=\{\xi^\nu\}_{\nu=1}^m$ is a set of points that provides a good universal discretization,  and $\mu_m$ is the uniform probability measure on $\Og_m$, i.e., 
$\mu_m\{ \xi^\nu\} =1/m$, $\nu=1,\dots,m$.    Let $\CD_N(\Omega_m)$ denote  the restriction 
of a system  $\CD_N$ on the set  $\Omega_m$.
Theorem \ref{PT2} below  guarantees that the simple greedy algorithm WOMP gives the corresponding Lebesgue-type inequality in the norm $L_2(\Omega_m,\mu_m)$, and hence 
 provides good sparse recovery. 

\begin{Theorem}[{\cite{DTM2}}]\label{PT2}  Let  $\CD_N=\{\vi_j\}_{j=1}^N$ be  a uniformly bounded Riesz basis  in $L_2(\Og, \mu)$  satisfying  \eqref{ub} and \eqref{Riesz} for some constants $0<R_1\leq R_2<\infty$. 
Let $\Og_m=\{\xi^1,\cdots, \xi^m\}$ be a finite subset of $\Og$  that provides the $L_2$-universal discretization (\ref{ud}) for the collection 
$\cX_u(\CD_N)$ and a given integer $1\leq u\leq N$.   Given a weakness parameter $0<t\leq 1$,   there exists a constant integer  $c=c(t,R_1,R_2)\ge 1$  such that for any integer $0\leq v\leq u/(1+c)$ and any $f_0\in \cC(\Omega)$,   the  $$\text{WOMP}\Bl(\D_N(\Og_m); t\Br)_{L_2(\Omega_m,\mu_m)}$$    applied to $f_0$  gives 
\be\label{mp}
\|f_{cv}    \|_{L_2(\Omega_m,\mu_m)} \le C\sigma_v(f_0,\CD_N(\Omega_m))_{L_2(\Omega_m,\mu_m)}, 
\ee
and
\be\label{mp2}
\|f_{c v}\|_{L_2(\Omega,\mu)} \le C\sigma_v(f_0,\CD_N)_\infty,
\ee
where $C>1$ is an  absolute constant, and $f_k$ denotes the residual of $f_0$ after the $k$-th iteration of the algorithm.
 \end{Theorem}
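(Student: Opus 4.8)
The plan is to reduce both inequalities to Theorem \ref{PT1}, applied in the discrete Hilbert space $H=L_2(\Og_m,\mu_m)$ with the restricted dictionary $\CD_N(\Og_m)$ (here continuity of $f_0$ and of the $\vi_j$ is what makes the restrictions well-defined elements of $H$). The engine for \eqref{mp} is the verification that $\CD_N(\Og_m)$ enjoys the unconditional property {\bf UP($u,u$)} in $H$; once this is in place, \eqref{mp} is essentially a transcription of Theorem \ref{PT1}. The inequality \eqref{mp2} will then follow from \eqref{mp} by a comparison argument that transfers the discrete $L_2(\mu_m)$-bound back to the continuous $L_2(\mu)$-norm, using \eqref{ud} on suitably sparse differences and an $L_\infty$ best $v$-term approximant as a bridge.

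First I would record a discrete Riesz property. For $f=\sum_{j\in A}a_j\vi_j$ with $|A|\le u$ we have $\|f\|_{L_2(\mu_m)}^2=\frac1m\sum_\nu|f(\xi^\nu)|^2$, and since $f\in\Sigma_u(\CD_N)$ the discretization \eqref{ud} (with $p=2$) gives $\frac12\|f\|_{L_2(\mu)}^2\le\|f\|_{L_2(\mu_m)}^2\le\frac32\|f\|_{L_2(\mu)}^2$. Combined with \eqref{Riesz} this yields
\be
\frac{R_1}{\sqrt2}\Bl(\sum_{j\in A}|a_j|^2\Br)^{1/2}\le\|f\|_{L_2(\mu_m)}\le\sqrt{\tfrac32}\,R_2\Bl(\sum_{j\in A}|a_j|^2\Br)^{1/2},
\ee
so $\CD_N(\Og_m)$ is a Riesz system in $H$ on every index set of cardinality at most $u$, with constants $R_1/\sqrt2$ and $\sqrt{3/2}\,R_2$. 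To deduce {\bf UP($u,u$)} I would take disjoint $A,J$ with $|A|+|J|\le u$, put $f=\sum_{i\in A}c_i\vi_i$ and $g=\sum_{j\in J}b_j\vi_j\in V_J(\CD_N(\Og_m))$; since $|A\cup J|\le u$, the discrete lower Riesz bound applied to $f-g$ together with the discrete upper bound applied to $f$ gives
\be
\|f-g\|_{L_2(\mu_m)}\ge\frac{R_1}{\sqrt2}\Bl(\sum_{i\in A}|c_i|^2\Br)^{1/2}\ge\frac{R_1}{\sqrt3\,R_2}\|f\|_{L_2(\mu_m)},
\ee
so \eqref{UP} holds with $U=\sqrt3\,R_2/R_1$.

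Now Theorem \ref{PT1} supplies $c_\ast=c_\ast(t,U)$ with $\|f_{\lceil c_\ast v\rceil}\|_{L_2(\mu_m)}\le C\sigma_v(f_0,\CD_N(\Og_m))_{L_2(\mu_m)}$ for $v\le[u/(1+c_\ast)]$. Choosing the integer $c:=\lceil c_\ast\rceil\ge1$ and using that the WOMP residual norms are non-increasing (each $f_k$ is the error of the orthogonal projection onto the increasing subspaces $H_k$), I get $\|f_{cv}\|_{L_2(\mu_m)}\le\|f_{\lceil c_\ast v\rceil}\|_{L_2(\mu_m)}$, which is \eqref{mp}, valid on the range $v\le u/(1+c)$.

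Finally, for \eqref{mp2} I would fix a near-best $L_\infty$ $v$-term approximant $g^\ast\in\Sigma_v(\CD_N)$ with $\|f_0-g^\ast\|_\infty\le\sigma_v(f_0,\CD_N)_\infty+\ve$ and write $f_{cv}=(f_0-g^\ast)-(G_{cv}-g^\ast)$, where $G_{cv}\in\Sigma_{cv}(\CD_N)$ is the WOMP approximant. The difference $G_{cv}-g^\ast$ lies in $\Sigma_{(c+1)v}(\CD_N)$, and the hypothesis $v\le u/(1+c)$ guarantees $(c+1)v\le u$, so \eqref{ud} applies and gives $\|G_{cv}-g^\ast\|_{L_2(\mu)}\le\sqrt2\,\|G_{cv}-g^\ast\|_{L_2(\mu_m)}$. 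Bounding the latter by $\|f_{cv}\|_{L_2(\mu_m)}+\|f_0-g^\ast\|_{L_2(\mu_m)}$, invoking \eqref{mp} together with $\sigma_v(f_0,\CD_N(\Og_m))_{L_2(\mu_m)}\le\|f_0-g^\ast\|_{L_2(\mu_m)}$, and using $\|\cdot\|_{L_2(\mu_m)},\|\cdot\|_{L_2(\mu)}\le\|\cdot\|_\infty$ for the probability measures $\mu_m,\mu$, every term collapses to a multiple of $\|f_0-g^\ast\|_\infty$; letting $\ve\to0$ yields \eqref{mp2} with an absolute constant. The main obstacle is the bookkeeping of sparsity levels so that each invocation of the universal discretization \eqref{ud} falls on a function of sparsity at most $u$ — this is exactly what forces the range $v\le u/(1+c)$, since the bridging difference $G_{cv}-g^\ast$ carries combined sparsity $(c+1)v$.
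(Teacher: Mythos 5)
Your proposal is correct and follows essentially the same route as the paper: you verify the unconditionality property \textbf{UP} for the discretized system by combining the Riesz inequalities \eqref{Riesz} with the universal discretization \eqref{ud} (arriving at the same constant $U=\sqrt{3}\,R_2/R_1$), apply Theorem \ref{PT1} in $L_2(\Omega_m,\mu_m)$ to get \eqref{mp}, and then derive \eqref{mp2} by the same bridging argument the paper uses in the proof of Theorem \ref{NUT1} (a sparse approximant as intermediary, sparsity bookkeeping so that \eqref{ud} applies to the difference, and the triangle inequality). The only cosmetic differences are that the paper establishes \textbf{UP} first in $L_2(\Omega,\mu)$ and then transfers it to the discrete space, whereas you work with the discrete Riesz bounds directly, and you use a near-best $L_\infty$ approximant (with $\ve\to 0$) where the paper works with a best approximant in $L_p(\mu_\xi)$.
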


Recall that  the modulus of smoothness of a Banach space  $X$ is defined as 
\begin{equation}\label{CG1}
\eta(w):=\eta(X,w):=\sup_{\sub{x,y\in X\\
		\|x\|= \|y\|=1}}\Bigg[\f {\|x+wy\|+\|x-wy\|}2 -1\Bigg],\  \ w>0,
\end{equation}
and that $X$ is called uniformly smooth  if  $\eta(w)/w\to 0$ when $w\to 0+$.
It is well known that the $L_p$ space with $1< p<\infty$ is a uniformly smooth Banach space with 
\be\label{CG2}
\eta(L_p,w)\le \begin{cases}(p-1)w^2/2, & 2\le p <\infty,\\   w^p/p,& 1\le p\le 2.
\end{cases}
\ee

 \section{New results on upper bounds}
\label{NU}

In addition to the formulated in the Introduction conditions on a system $\Psi$ we need one more property of the system $\Psi$.

{\bf $u$-term Nikol'skii inequality.} Let $1\le q\le p\le \infty$ and let $\Psi$ be a system from $L_p:=L_p(\Omega,\mu)$. For a natural number $u$ we say that the system $\Psi$ has 
the $u$-term Nikol'skii inequality for the pair $(q,p)$ with the constant $H$ if the following 
inequality holds 
\be\label{vNI}
\|f\|_p \le H\|f\|_q,\qquad \forall f\in \Sigma_u(\Psi).
\ee
 In such a case we write $\Psi \in NI(q,p,H,u)$. 
 
 Note, that obviously $H\ge 1$.
 
 \begin{Theorem}\label{NUT1}  Let  $\CD_N=\{\vi_j\}_{j=1}^N$ be  a uniformly bounded Riesz system  in $L_2(\Og, \mu)$  satisfying  \eqref{ub} and \eqref{Riesz} for some constants $0<R_1\leq R_2<\infty$. 
Let $\Og_m=\{\xi^1,\cdots, \xi^m\}$ be a finite subset of $\Og$  that provides  $L_2$-universal discretization for the collection 
$\cX_u(\CD_N)$,   $1\leq u\leq N$.   Assume in addition that $\D_N \in NI(2,p,H,u)$ with $p\in [2,\infty)$. Then, for a given  weakness parameter $0<t\leq 1$,   there exists a constant integer  $c=c(t,R_1,R_2)\ge 1$  such that for any integer $0\leq v\leq u/(1+c)$ and any $f_0\in \cC(\Omega)$,   the  
$$
\text{WOMP}\Bl(\D_N(\Og_m); t\Br)_{L_2(\Omega_m,\mu_m)}
$$    
applied to $f_0$  gives 
\be\label{mpn}
\|f_{cv}    \|_{L_2(\Omega_m,\mu_m)} \le C_0\sigma_v(f_0,\CD_N(\Omega_m))_{L_2(\Omega_m,\mu_m)}, 
\ee
\be\label{mp3n}
\|f_{c v}\|_{L_p(\Omega,\mu)} \le HC_1\sigma_v(f_0,\CD_N)_{L_p(\Omega,\mu_\xi)},
\ee
where $C_i$, $i=0,1$, are absolute constants,  $f_k$ denotes the residual of $f_0$ after the $k$-th iteration of the algorithm, and $\mu_\xi$ is defined in (\ref{muxi}).
 \end{Theorem}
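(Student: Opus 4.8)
The plan is to obtain the first bound \eqref{mpn} for free — it is literally inequality \eqref{mp} of Theorem \ref{PT2}, under exactly the same hypotheses — and to concentrate the work on the new $L_p$ estimate \eqref{mp3n}. Throughout, write $\Phi$ for the orthogonal projection $G_{cv}(f_0,\D_N(\Og_m))$ produced by the algorithm, lifted from $\Og_m$ to a genuine function $\Phi=\sum_i c_i\vi_{j_i}\in\Sigma_{cv}(\D_N)$ on all of $\Og$ (the $\vi_{j_i}$ being the selected dictionary elements), so that $f_{cv}=f_0-\Phi$ and $\Phi|_{\Og_m}=G_{cv}(f_0)$, whence $\|f_{cv}\|_{L_2(\Og_m,\mu_m)}=\|f_0-\Phi\|_{L_2(\Og_m,\mu_m)}$. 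Fix a best $v$-term approximant $P\in\Sigma_v(\D_N)$ realizing $\sigma_v(f_0,\D_N)_{L_p(\Og,\mu_\xi)}$, and split $f_0-\Phi=(f_0-P)+(P-\Phi)$ by the triangle inequality in $L_p(\Og,\mu)$.

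For the first piece no sparsity is needed: since $\mu\le 2\mu_\xi$ as measures, $\|f_0-P\|_{L_p(\Og,\mu)}\le 2^{1/p}\|f_0-P\|_{L_p(\Og,\mu_\xi)}=2^{1/p}\sigma_v(f_0,\D_N)_{L_p(\Og,\mu_\xi)}$. The second piece is where all the hypotheses enter, and the key observation is that $P-\Phi\in\Sigma_{(1+c)v}(\D_N)\subset\Sigma_u(\D_N)$, because the running restriction $v\le u/(1+c)$ forces $(1+c)v\le u$. This membership in $\Sigma_u$ activates both structural tools: the assumption $\D_N\in NI(2,p,H,u)$ gives $\|P-\Phi\|_{L_p(\Og,\mu)}\le H\|P-\Phi\|_{L_2(\Og,\mu)}$, and the $L_2$-universal discretization \eqref{ud} for $\cX_u(\D_N)$ gives $\|P-\Phi\|_{L_2(\Og,\mu)}\le\sqrt2\,\|P-\Phi\|_{L_2(\Og_m,\mu_m)}$.

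It remains to control $\|P-\Phi\|_{L_2(\Og_m,\mu_m)}$ in the discrete norm, which is where the greedy guarantee lives. A triangle inequality together with \eqref{mpn} does this: $\|P-\Phi\|_{L_2(\Og_m,\mu_m)}\le\|f_0-P\|_{L_2(\Og_m,\mu_m)}+\|f_0-\Phi\|_{L_2(\Og_m,\mu_m)}$, and since $P|_{\Og_m}\in\Sigma_v(\D_N(\Og_m))$ we have $\|f_0-\Phi\|_{L_2(\Og_m,\mu_m)}=\|f_{cv}\|_{L_2(\Og_m,\mu_m)}\le C_0\,\sigma_v(f_0,\D_N(\Og_m))_{L_2(\Og_m,\mu_m)}\le C_0\|f_0-P\|_{L_2(\Og_m,\mu_m)}$, so the sum is $\le(1+C_0)\|f_0-P\|_{L_2(\Og_m,\mu_m)}$. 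Finally I close the loop back to the right-hand side of \eqref{mp3n} by the same measure domination: $\mu_m\le 2\mu_\xi$ yields $\|f_0-P\|_{L_2(\Og_m,\mu_m)}\le\sqrt2\,\|f_0-P\|_{L_2(\Og,\mu_\xi)}$, and since $\mu_\xi$ is a probability measure and $p\ge 2$, Jensen's inequality gives $\|f_0-P\|_{L_2(\Og,\mu_\xi)}\le\|f_0-P\|_{L_p(\Og,\mu_\xi)}=\sigma_v(f_0,\D_N)_{L_p(\Og,\mu_\xi)}$. Chaining these estimates and absorbing the numerical factors into an absolute constant $C_1$ (using $H\ge 1$ to swallow the $2^{1/p}$ term) produces \eqref{mp3n}.

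The step I expect to be the crux — and the reason the averaged measure $\mu_\xi$, rather than $\mu$, appears on the right of \eqref{mp3n} — is the treatment of the non-sparse error $f_0-P$ (and of $f_0$ itself). Neither the discretization inequality \eqref{ud} nor the Nikol'skii inequality applies to these functions, so they can only be transported among the three measures $\mu,\mu_m,\mu_\xi$ by the crude domination $\mu,\mu_m\le 2\mu_\xi$; all the genuine structure — the $L_2$-discretization and the Nikol'skii jump from $L_2$ to $L_p$ — is reserved for the sparse difference $P-\Phi$, whose membership in $\Sigma_u(\D_N)$ is precisely what the constraint $v\le u/(1+c)$ guarantees.
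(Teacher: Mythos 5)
Your proof is correct and takes essentially the same route as the paper's: the same decomposition $f_0-\Phi=(f_0-P)+(P-\Phi)$ with the sparse difference $P-\Phi\in\Sigma_u(\D_N)$ receiving the Nikol'skii inequality and the $L_2$-discretization, and the same triangle-inequality control of $\|P-\Phi\|_{L_2(\Omega_m,\mu_m)}$ via \eqref{mpn}. The only differences are cosmetic: you invoke Theorem \ref{PT2} for \eqref{mpn} (which the paper also does, re-proving it only for completeness), and you apply the measure domination $\mu_m\le 2\mu_\xi$ before Jensen's inequality rather than after, which merely changes the absolute constants.
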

 \begin{proof} We begin with the inequality (\ref{mpn}). It follows from the inequality (\ref{mp}) in Theorem \ref{PT2}. For completeness we give its simple proof here. 
Using (\ref{Riesz}), we obtain that  for any  sets $A\subset \{1,2,\cdots, N\}$ and  $\Lambda\subset \{1,\cdots, N\}\setminus A$ and for  any  sequences $\{x_i\}_{i\in A},$ $\{c_i\}_{i\in\Ld}\subset\CC$, 
$$
\Bl\|\sum_{i\in A}x_i\ff_i-\sum_{i\in\Lambda}c_i\ff_i\Br\|_{L_2(\Omega,\mu)}^2 \ge R_1^2  \sum_{i\in A} |x_i|^2 \ge R_2^{-2} R_1^2\Bl \|\sum_{i\in A}x_i\ff_i \Br\|_{L_2(\Omega,\mu)}^2,
$$
meaning  that the system  $\CD_N$ has the {\bf UP}$(v,N)$ property  with constant $U_1 = R_2/R_1$  in the  space $L_2(\Omega,\mu)$  for any integer $1\leq v< N$. 
It then follows from the  discretization inequalities  
(\ref{ud}) with $\mathbf {\mathcal{X}}=\mathbf {\mathcal{X}}_u(\D_N)$ that the system $\D:=\CD_N(\Omega_m)$, which is   the restriction 
of $\CD_N$ on $\Omega_m=\{\xi^1, \cdots,\xi^m\}$, has  
the  properties  {\bf UP($v,u$)} in the space  $L_2(\Omega_m,\mu_m)$  with  constant $U_2 =U_1 3^{1/2}$ for any integer $1\leq v\leq u$.
Thus,  applying Theorem \ref{PT1} to the discretized dictionary  $\CD_N(\Omega_m)$ in the Hilbert space  $L_2(\Omega_m,\mu_m)$, we conclude that   the algorithm  
$$
\text{WOMP}\bigl(\CD_N(\Omega_m); t\bigr)_{L_2(\Omega_m,\mu_m)}
$$ 
applied to  $f_0\Bl|_{\Og_m} $ gives
$$
\left\|f_{cv} \right\|_{L_2(\Omega_m,\mu_m)}\le C_0\sigma_v(f_0,\CD_N(\Omega_m))_{L_2(\Omega_m,\mu_m)}
$$
whenever $v+c v\leq u$, where  $c =c(t, R_1, R_2)\in\NN$. 
 This proves (\ref{mpn}) in Theorem \ref{NUT1}. 
 
 We now derive (\ref{mp3n}) from (\ref{mpn}).   Let $h\in \Sigma_v(\CD_N)$ be such that  
 $$
 \|f_0-h\|_{L_p(\mu_\xi)} = \sigma_v(f_0,\CD_N)_{L_p(\mu_\xi)}.
 $$ 
 We have
$$
\|f_{cv}\|_{L_p(\Omega,\mu)} \le \|h-f_0\|_{L_p(\Omega,\mu)} + \|h - G_{cv}(f_0,\CD_N(\Omega_m))\|_{L_p(\Omega,\mu)}.
$$
First, we estimate
$$
\|h-f_0\|_{L_p(\Omega,\mu)} \le 2^{1/p}\|h-f_0\|_{L_p(\mu_\xi)}= 2^{1/p}\sigma_v(f_0,\CD_N)_{L_p(\mu_\xi)}.
$$
Second, using that $h - G_{cv}(f_0,\CD_N(\Omega_m)) \in \Sigma_u(\CD_N)$, by our assumption $\D_N \in NI(2,p,H,u)$ and by discretization (\ref{ud})   we 
conclude that
$$
\|h - G_{cv}(f_0,\CD_N(\Omega_m))\|_{L_p(\Omega,\mu)} \le  H \|h- G_{cv}(f_0,\CD_N(\Omega_m))\|_{L_2(\Omega,\mu)} 
$$
\be\label{NU4}
 \le 2^{1/2}H\|h - G_{cv}(f_0,\CD_N(\Omega_m))\|_{L_2(\Omega_m,\mu_m)}.
 \ee
Next, (\ref{mpn}) implies 
$$
\|h - G_{cv}(f_0,\CD_N(\Omega_m))\|_{L_2(\Omega_m,\mu_m)} \le \|h-f_0\|_{L_2(\Omega_m,\mu_m)} +\|f_{cv}\|_{L_2(\Omega_m,\mu_m)} 
$$
$$
\le \|h-f_0\|_{L_p(\Omega_m,\mu_m)} +C_0\sigma_v(f_0,\CD_N(\Omega_m))_{L_2(\Omega_m,\mu_m)} 
$$
$$
\le 2^{1/p}(\|h-f_0\|_{L_p(\mu_\xi)} +C_0\sigma_v(f_0,\CD_N(\Omega_m))_{L_p(\mu_\xi)}) \le 
2^{1/p}(1+C_0)\sigma_v(f_0,\CD_N)_{L_p(\mu_\xi)}.
$$
This and (\ref{NU4}) imply
\be\label{NU5}
\|h - G_{cv}(f_0,\CD_N(\Omega_m))\|_{L_p(\Omega,\mu)} \le 2^{1/2+1/p}H(1+C_0)\sigma_v(f_0,\CD_N)_{L_p(\mu_\xi)}.
\ee
Finally,
$$
\|f_{cv}\|_{L_p(\Omega,\mu)} \le (1+2^{1/2+1/p}H(1+C_0))\sigma_v(f_0,\CD_N)_{L_p(\mu_\xi)},
$$
which completes the proof.

 \end{proof}

   For brevity denote $L_p(\xi) := L_p(\Omega_m,\mu_m)$, where $\Omega_m=\{\xi^\nu\}_{\nu=1}^m$  and  $\mu_m(\xi^\nu) =1/m$, $\nu=1,\dots,m$. Let 
$B_v(f,\D_N,L_p(\xi))$ denote the best $v$-term approximation of $f$ in the $L_p(\xi)$ norm with 
respect to the system $\D_N$. Note that $B_v(f,\D_N,L_p(\xi))$ may not be unique. Obviously,
\be\label{D5}
\|f-B_v(f,\D_N,L_p(\xi))\|_{L_p(\xi)} = \sigma_v(f,\D_N)_{L_p(\xi)}.
\ee

We proved in \cite{DTM3} the following theorem.

 \begin{Theorem}[{\cite{DTM3}}]\label{NUT2} Let $1\le p<\infty$ and let $m$, $v$, $N$ be given natural numbers such that $2v\le N$.  Let $\D_N\subset \C(\Og)$ be  a system of $N$ elements. Assume that  there exists a set $\xi:= \{\xi^j\}_{j=1}^m \subset \Omega $, which provides  the  one-sided $L_p$-universal discretization 
  \be\label{D6}
 \|f\|_p \le D\left(\frac{1}{m} \sum_{j=1}^m |f(\xi^j)|^p\right)^{1/p}, \quad \forall\, f\in \Sigma_{2v}(\D_N), 
\ee
  for the collection $\cX_{2v}(\D_N)$. Then for   any  function $ f \in \C(\Omega)$ we have
\be\label{D7}
  \|f-B_v(f,\D_N,L_p(\xi))\|_{L_p(\Omega,\mu)} \le 2^{1/p}(2D +1) \sigma_v(f,\D_N)_{L_p(\Og, \mu_\xi)}
 \ee
 and
 \be\label{D8}
  \|f-B_v(f,\D_N,L_p(\xi))\|_{L_p(\Omega,\mu)} \le  (2D +1) \sigma_v(f,\D_N)_\infty.
 \ee
 \end{Theorem}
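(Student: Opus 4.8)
The plan is to compare the discrete best $v$-term approximant $g:=B_v(f,\D_N,L_p(\xi))$ with a best $v$-term approximant $h$ for the continuous problem, moving between the discrete norm $\|\cdot\|_{L_p(\xi)}$ and the continuous norm $\|\cdot\|_{L_p(\Omega,\mu)}$ via the one-sided discretization \eqref{D6} together with the two elementary domination inequalities $\mu\le 2\mu_\xi$ and $\mu_m\le 2\mu_\xi$, both immediate from $\mu_\xi=(\mu+\mu_m)/2$ (since $2\mu_\xi=\mu+\mu_m$ dominates each summand).

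First I would fix a best $v$-term approximant $h\in\Sigma_v(\D_N)$ for the norm $L_p(\mu_\xi)$, so that $\|f-h\|_{L_p(\mu_\xi)}=\sigma_v(f,\D_N)_{L_p(\mu_\xi)}$; such an $h$ exists because $\D_N$ is finite, whence $\cX_v(\D_N)$ is a finite family of finite-dimensional (proximinal) subspaces. Splitting by the triangle inequality in $L_p(\Omega,\mu)$ gives $\|f-g\|_{L_p(\Omega,\mu)}\le \|f-h\|_{L_p(\Omega,\mu)}+\|h-g\|_{L_p(\Omega,\mu)}$. The first term is handled by $\mu\le 2\mu_\xi$, which yields $\|f-h\|_{L_p(\Omega,\mu)}\le 2^{1/p}\|f-h\|_{L_p(\mu_\xi)}=2^{1/p}\sigma_v(f,\D_N)_{L_p(\mu_\xi)}$.

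The crux is the second term. The key observation is that $h-g$ is a difference of two $v$-term approximants, hence $h-g\in\Sigma_{2v}(\D_N)$, which is precisely the class for which \eqref{D6} is assumed; therefore $\|h-g\|_{L_p(\Omega,\mu)}\le D\|h-g\|_{L_p(\xi)}$. I would then bound $\|h-g\|_{L_p(\xi)}\le\|h-f\|_{L_p(\xi)}+\|f-g\|_{L_p(\xi)}$ and use that $g$ is the discrete minimizer while $h$ is merely a competitor, so $\|f-g\|_{L_p(\xi)}=\sigma_v(f,\D_N)_{L_p(\xi)}\le\|f-h\|_{L_p(\xi)}$; this gives $\|h-g\|_{L_p(\xi)}\le 2\|f-h\|_{L_p(\xi)}$. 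Applying $\mu_m\le 2\mu_\xi$ once more turns this into $\|f-h\|_{L_p(\xi)}\le 2^{1/p}\sigma_v(f,\D_N)_{L_p(\mu_\xi)}$. Collecting the two contributions produces the constant $2^{1/p}+2\cdot 2^{1/p}D=2^{1/p}(2D+1)$, which is exactly \eqref{D7}.

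For \eqref{D8} I would run the identical argument, but choose $h$ to be a best $v$-term approximant in the sup norm, so $\|f-h\|_\infty=\sigma_v(f,\D_N)_\infty$. Since $\mu$ and $\mu_m$ are both probability measures, every $L_p$-norm of $f-h$ (with respect to $\mu$, $\mu_m$, or $\mu_\xi$) is dominated by $\|f-h\|_\infty$, so each factor $2^{1/p}$ in the estimate above is replaced by $1$, and the resulting constant is $1+2D=2D+1$. The only genuinely structural step is the identification $h-g\in\Sigma_{2v}(\D_N)$, which forces the use of $2v$-term (rather than $v$-term) discretization and explains the hypothesis $2v\le N$; everything else is bookkeeping of the two domination inequalities $\mu,\mu_m\le 2\mu_\xi$.
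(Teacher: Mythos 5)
Your proof is correct and follows essentially the same route as the paper: the paper itself defers the proof of Theorem \ref{NUT2} to \cite{DTM3}, but its proof of the companion Theorem \ref{NUT3} uses exactly your decomposition — compare the discrete best $v$-term approximant $g$ with a best approximant $h$ in $L_p(\mu_\xi)$, bound $\|f-h\|_{L_p(\mu)}$ via $\mu\le 2\mu_\xi$, and treat the cross term using $h-g\in\Sigma_{2v}(\D_N)$, the one-sided discretization, the discrete minimality of $g$, and $\mu_m\le 2\mu_\xi$. The constants you obtain, $2^{1/p}(2D+1)$ and $(2D+1)$, match (\ref{D7}) and (\ref{D8}) exactly.
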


 We now prove a version of Theorem \ref{NUT2}.
 
 \begin{Theorem}\label{NUT3} Let $2\le p<\infty$ and let $m$, $v$, $N$ be given natural numbers such that $2v\le N$.  Let $\D_N\subset \C(\Og)$ be  a system of $N$ elements such that $\D_N \in NI(2,p,H,2v)$. Assume that  there exists a set $\xi:= \{\xi^j\}_{j=1}^m \subset \Omega $, which provides the one-sided $L_2$-universal discretization 
  \be\label{D12}
 \|f\|_2 \le D\left(\frac{1}{m} \sum_{j=1}^m |f(\xi^j)|^2\right)^{1/2}, \quad \forall\, f\in \Sigma_{2v}(\D_N), 
\ee
  for the collection $\cX_{2v}(\D_N)$. Then for   any  function $ f \in \C(\Omega)$ we have
\be\label{D13}
  \|f-B_v(f,\D_N,L_2(\xi))\|_{L_p(\Omega,\mu)} \le 2^{1/p}(2DH +1) \sigma_v(f,\D_N)_{L_p(\Og, \mu_\xi)}.
 \ee
 and
 \be\label{D14}
  \|f-B_v(f,\D_N,L_2(\xi))\|_{L_p(\Omega,\mu)} \le (2DH +1) \sigma_v(f,\D_N)_\infty.
 \ee
 
 \end{Theorem}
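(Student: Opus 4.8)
The plan is to imitate the derivation of (\ref{mp3n}) in Theorem \ref{NUT1}, replacing the greedy residual $f_{cv}$ by the best $L_2(\xi)$-approximant. Write $g := B_v(f,\D_N,L_2(\xi)) \in \Sigma_v(\D_N)$ and let $h \in \Sigma_v(\D_N)$ be a best $v$-term approximant of $f$ in the $L_p(\mu_\xi)$ norm, so that $\|f-h\|_{L_p(\mu_\xi)} = \sigma_v(f,\D_N)_{L_p(\mu_\xi)}$. Splitting by the triangle inequality,
$$\|f-g\|_{L_p(\Omega,\mu)} \le \|f-h\|_{L_p(\Omega,\mu)} + \|h-g\|_{L_p(\Omega,\mu)},$$
I would estimate the two terms separately. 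Since $\mu \le 2\mu_\xi$ by (\ref{muxi}), the first term is at most $2^{1/p}\|f-h\|_{L_p(\mu_\xi)} = 2^{1/p}\sigma_v(f,\D_N)_{L_p(\mu_\xi)}$, which already accounts for the ``$+1$'' in the constant $2DH+1$.

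The second term is where the Nikol'skii hypothesis enters. First note that $h-g \in \Sigma_{2v}(\D_N)$, so $\D_N \in NI(2,p,H,2v)$ gives $\|h-g\|_{L_p(\Omega,\mu)} \le H\|h-g\|_{L_2(\Omega,\mu)}$, and the one-sided $L_2$-discretization (\ref{D12}) then yields $\|h-g\|_{L_2(\Omega,\mu)} \le D\|h-g\|_{L_2(\xi)}$. The gain comes from exploiting that $g$ is the \emph{best} $L_2(\xi)$-approximant: since $h \in \Sigma_v(\D_N)$ is a competitor, $\|f-g\|_{L_2(\xi)} \le \|f-h\|_{L_2(\xi)}$, whence $\|h-g\|_{L_2(\xi)} \le 2\|f-h\|_{L_2(\xi)}$. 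Finally I pass back to the target norm: on the probability space $(\Omega_m,\mu_m)$ the embedding $\|\cdot\|_2 \le \|\cdot\|_p$ (valid for $p\ge 2$) together with $\mu_m \le 2\mu_\xi$ gives $\|f-h\|_{L_2(\xi)} \le 2^{1/p}\|f-h\|_{L_p(\mu_\xi)} = 2^{1/p}\sigma_v(f,\D_N)_{L_p(\mu_\xi)}$. Chaining these bounds produces $\|h-g\|_{L_p(\Omega,\mu)} \le 2^{1/p}\cdot 2DH\,\sigma_v(f,\D_N)_{L_p(\mu_\xi)}$, and adding the first term gives exactly (\ref{D13}).

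For (\ref{D14}) I would run the identical chain with $h$ chosen instead as a best $v$-term $L_\infty$-approximant, $\|f-h\|_\infty = \sigma_v(f,\D_N)_\infty$. Since $\mu$ and $\mu_m$ are probability measures, each relevant $L_p(\mu)$ or $L_2(\mu_m)$ norm of $f-h$ is dominated by $\|f-h\|_\infty$ directly (no factor $2^{1/p}$ is needed), so the same four inequalities collapse to $\|f-g\|_{L_p(\Omega,\mu)} \le (1+2DH)\sigma_v(f,\D_N)_\infty$, which is (\ref{D14}).

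The argument is a routine chaining, so there is no genuinely hard step; the only points demanding care are bookkeeping. One must verify the membership $h-g \in \Sigma_{2v}(\D_N)$ (guaranteed by $2v \le N$) so that both the Nikol'skii inequality and the discretization (\ref{D12}) are legitimately applicable, and one must correctly track the three elementary measure comparisons --- $\mu \le 2\mu_\xi$, $\mu_m \le 2\mu_\xi$, and the monotonicity $\|\cdot\|_2 \le \|\cdot\|_p$ on a probability space --- in order to land on the advertised constant $2DH+1$ rather than something larger.
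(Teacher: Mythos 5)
Your proof is correct and follows essentially the same route as the paper's: the same triangle-inequality splitting with $h$ a best $v$-term approximant in $L_p(\mu_\xi)$, the same use of $h-g\in\Sigma_{2v}(\D_N)$ to invoke the Nikol'skii inequality and the one-sided $L_2$-discretization, the same doubling trick via the minimality of $g$ in $L_2(\xi)$, and the same measure comparisons $\mu\le 2\mu_\xi$, $\mu_m\le 2\mu_\xi$, yielding the identical constant $2^{1/p}(2DH+1)$. Your explicit treatment of (\ref{D14}) by rerunning the chain with an $L_\infty$-best approximant (where the factors $2^{1/p}$ disappear since $\mu$ and $\mu_m$ are probability measures) is exactly what the paper leaves implicit.
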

\begin{proof} For brevity denote $g:= B_v(f,\D_N,L_2(\xi))$ and $h:=B_v(f,\D_N,L_p(\mu_\xi))$. Then
$$
\|f-g\|_{L_p(\mu)} \le \|f-h\|_{L_p(\mu)} + \|h-g\|_{L_p(\mu)}.
$$
First, we estimate
$$
\|f-h\|_{L_p(\mu)} \le 2^{1/p} \|f-h\|_{L_p(\mu_\xi)} = 2^{1/p} \sigma_v(f,\D_N)_{L_p(\mu_\xi)}.
$$
Second, using the obvious fact that $h-g\in \Sigma_{2v}(\D_N)$, by the assumption $\D_N \in NI(2,p,H,2v)$
we obtain
$$
\|h-g\|_{L_p(\mu)} \le H \|h-g\|_{L_2(\mu)}
$$
and continue by (\ref{D12}) and by the trivial inequality $\|f-g\|_{L_2(\xi)} \le \|f-h\|_{L_2(\xi)}$
$$
\le HD \|h-g\|_{L_2(\xi)} \le 2HD \|f-h\|_{L_2(\xi)} \le 2HD \|f-h\|_{L_p(\xi)} 
$$
$$
 \le  2^{1+1/p}HD \|f-h\|_{L_p(\mu_\xi)} = 2^{1+1/p}HD \sigma_v(f,\D_N)_{L_p(\mu_\xi)}.
$$
Combining the above inequalities, we complete the proof. 
\end{proof}

\section{New results on lower bounds}
\label{NL}

Let us discuss lower bounds for the nonlinear characteristic $\varrho_m^o(\bA^r_\bt(\Psi),L_p)$.
We will do it in the special case, when $\Psi$ is the trigonometric system $\Tr^d := \{e^{i(\bk,\bx)}\}_{\bk\in \Z^d}$. Denote for $\bN = (N_1,\dots,N_d)$, $N_j\in \N_0$, $j=1,\dots,d$,
$$
\Tr(\bN,d) := \left\{f=\sum_{\bk: |k_j|\le N_j, j=1,\dots,d} c_j e^{i(\bk,\bx)}\right\},\quad \vartheta(\bN) :=\prod_{j=1}^d (2N_j+1). 
$$
In this section $\Omega = \T^d$ and $\mu$ is the normalized Lebesgue measure on $\T^d$. 

\begin{Lemma}\label{NLL1} Let $1\le q\le p\le \infty$ and let  $\Tr(\bN,d)_q$ denote the unit $L_q$-ball of the subspace $\Tr(\bN,d)$. Then we have for $m\le \vartheta(\bN)/2$ that
$$
\varrho_m^o(\Tr(2\bN,d)_q,L_p) \ge c(d)\vartheta(\bN)^{1/q-1/p}  .
$$
\end{Lemma}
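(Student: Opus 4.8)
The plan is to use the standard duality between optimal recovery and ``fooling'' functions, and then to exhibit a single concentrated trigonometric polynomial invisible to any $m$ sample points. Fix an arbitrary set $\xi=\{\xi^1,\dots,\xi^m\}$ and an arbitrary map $\cM$. If $f$ lies in the class $\Tr(2\bN,d)_q$ and $f(\xi^j)=0$ for all $j$, then $-f$ lies in the class as well and produces the same data, so $\cM$ returns the same $g=\cM(0,\dots,0)$ for both; consequently
$$
\sup_{h\in\Tr(2\bN,d)_q}\|h-\cM(h(\xi))\|_p\ \ge\ \tfrac12\bigl(\|f-g\|_p+\|{-f}-g\|_p\bigr)\ \ge\ \|f\|_p .
$$
Taking the infimum over $\cM$ and over $\xi$, it suffices to produce, for \emph{every} configuration of $m\le\vartheta(\bN)/2$ points, a polynomial $f\in\Tr(2\bN,d)$ with $\|f\|_q\le 1$, $f(\xi^j)=0$ for all $j$, and $\|f\|_p\ge c(d)\vartheta(\bN)^{1/q-1/p}$.

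Second, I would build $f$ as $f=|u|^2=u\bar u$ with $u\in\Tr(\bN,d)$ vanishing at the sample points. Then automatically $f\ge 0$, $f(\xi^j)=0$, and $f\in\Tr(2\bN,d)$ (each frequency of $u\bar u$ has coordinates bounded by $2N_j$), while $\|f\|_p=\|u\|_{2p}^2$ and $\|f\|_q=\|u\|_{2q}^2$. So the task becomes: find $u$ in the subspace $U:=\{u\in\Tr(\bN,d):u(\xi^j)=0,\ j=1,\dots,m\}$ whose $L_{2p}$-to-$L_{2q}$ ratio is as large as the Nikol'skii inequality permits, namely $\|u\|_{2p}/\|u\|_{2q}\gg\vartheta(\bN)^{1/(2q)-1/(2p)}$. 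Here the hypothesis enters: $\dim U\ge\vartheta(\bN)-m\ge\vartheta(\bN)/2$, since $U$ is cut out of the $\vartheta(\bN)$-dimensional space $\Tr(\bN,d)$ by $m$ linear conditions.

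Third, to produce a maximally concentrated $u\in U$ I would use the reproducing kernel $K_U(x,y)=\sum_\ell v_\ell(x)\overline{v_\ell(y)}$ of $U$, with $\{v_\ell\}$ an $L_2$-orthonormal basis of $U$. Since $\int_{\T^d}K_U(x,x)\,d\mu=\dim U\ge\vartheta(\bN)/2$ while $K_U(x,x)\le K_{\Tr(\bN,d)}(x,x)=\vartheta(\bN)$ pointwise, there is a point $x_0$ with $K_U(x_0,x_0)\ge\vartheta(\bN)/2$. Setting $u:=K_U(\cdot,x_0)$, the element $u$ lies in $U$ (hence vanishes at the $\xi^j$), satisfies $u(x_0)=\|u\|_2^2=K_U(x_0,x_0)\asymp\vartheta(\bN)$, and $\|u\|_\infty\le C(d)\vartheta(\bN)^{1/2}\|u\|_2\asymp\vartheta(\bN)$ by the $L_2\to L_\infty$ Nikol'skii inequality. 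The upper bound $\|f\|_q=\|u\|_{2q}^2\le C(d)\vartheta(\bN)^{2-1/q}$ then follows from the $L_2\to L_{2q}$ Nikol'skii inequality together with $\|u\|_2\asymp\vartheta(\bN)^{1/2}$.

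The main obstacle, and the step I would treat most carefully, is the matching \emph{lower} bound $\|f\|_p=\|u\|_{2p}^2\gg\vartheta(\bN)^{2-1/p}$, i.e. genuine concentration of $u$ rather than mere oscillation, since the trivial bound $\|u\|_{2p}\ge\|u\|_{2q}$ only yields ratio $1$. I would get it from the Bernstein inequalities $\|\partial u/\partial x_j\|_\infty\le N_j\|u\|_\infty$: because $u(x_0)\ge\vartheta(\bN)/2$ and $\|u\|_\infty\asymp\vartheta(\bN)$, on the box $\{x:|x_j-x_{0,j}|\le c(d)/\max(N_j,1)\}$ the value of $u$ cannot drop below $\vartheta(\bN)/4$, and this box has measure $\gg_d 1/\vartheta(\bN)$. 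Integrating $|u|^{2p}\ge(\vartheta(\bN)/4)^{2p}$ over it gives $\|u\|_{2p}\gg_d\vartheta(\bN)^{1-1/(2p)}$. Combining the two norm estimates yields $\|f\|_p/\|f\|_q\gg_d\vartheta(\bN)^{1/q-1/p}$; normalizing $f$ so that $\|f\|_q=1$ completes the construction, and hence the proof. Coordinates with $N_j=0$ are trivial and affect only the constant $c(d)$.
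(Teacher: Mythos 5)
Your proof is correct, and while it shares the paper's outer skeleton --- the $\pm f$ fooling argument and the trick of building the fooling function as a product of two polynomials from $\Tr(\bN,d)$ so that it lands in $\Tr(2\bN,d)$ and still vanishes on $\xi$ --- the construction of the concentrated function is genuinely different. The paper takes any $g_\xi$ in the null space $T(\xi)=\{g\in\Tr(\bN,d):g(\xi^\nu)=0\}$ normalized by $\|g_\xi\|_\infty=1$ with peak at $\bx^*$, and multiplies it by the Fej\'er kernel, $f=g_\xi\,\cK_\bN(\cdot-\bx^*)$; the concentration then comes for free from classical kernel facts, $\|\cK_\bN\|_q\ll \vartheta(\bN)^{1-1/q}$ and $\cK_\bN(0)\asymp\vartheta(\bN)$, after which the Nikol'skii inequality converts the single large value $|f(\bx^*)|\gg\vartheta(\bN)$ into $\|f\|_p\gg\vartheta(\bN)^{1-1/p}$. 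You instead take $u=K_U(\cdot,x_0)$, the reproducing kernel of the null space at a point $x_0$ found by averaging $K_U(x,x)$, and set $f=|u|^2$; the upper bound on $\|f\|_q$ comes from Nikol'skii, and the lower bound on $\|f\|_p$ from a Bernstein localization giving $|u|\gg\vartheta(\bN)$ on a box of measure $\gg_d 1/\vartheta(\bN)$. Your route is longer (the RKHS averaging step and the Bernstein argument replace two lines of Fej\'er estimates) but self-contained and produces a nonnegative fooling function; the paper's route is shorter because the Fej\'er kernel's known norms and peak value do the concentration in one stroke. One structural difference worth noting: the paper's argument only needs $T(\xi)\neq\{0\}$, i.e.\ $m<\vartheta(\bN)$, whereas your averaging step uses the hypothesis $m\le\vartheta(\bN)/2$ quantitatively to guarantee $K_U(x_0,x_0)\ge\vartheta(\bN)/2$; both suffice for the lemma as stated. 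Your treatment of the edge cases ($N_j=0$, $p=\infty$, uniformity of the constant in $p,q$) is adequate.
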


\begin{proof} Let a set $\xi\subset \T^d := [0,2\pi)^d$ of points $\xi^1,\dots,\xi^m$ be given. Consider the subspace
$$
T(\xi) := \{f\in \Tr(\bN,d):\, f(\xi^\nu) =0,\quad \nu=1,\dots,m\}.
$$
Let  $g_\xi\in T(\xi)$ and a point $\bx^*$ be such that $|g_\xi(\bx^*)|=\|g_\xi\|_\infty=1$. For the further argument we need some classical trigonometric polynomials. The univariate Fej\'er kernel of order $j - 1$:
$$
\mathcal K_{j} (x) := \sum_{|k|\le j} \bigl(1 - |k|/j\bigr) e^{ikx} 
=\frac{(\sin (jx/2))^2}{j (\sin (x/2))^2}.
$$
The Fej\'er kernel is an even nonnegative trigonometric
polynomial of order $j-1$.  It satisfies the obvious relations
\be\label{FKm}
\| \mathcal K_{j} \|_1 = 1, \qquad \| \mathcal K_{j} \|_{\infty} = j.
\ee
Let $\cK_\bj(\bx):= \prod_{i=1}^d \cK_{j_i}(x_i)$ be the $d$-variate Fej\'er kernels for $\bj = (j_1,\dots,j_d)$ and $\bx=(x_1,\dots,x_d)$.
We set 
\be\label{Rf}
f(\bx) := g_\xi(\bx)\cK_\bN(\bx-\bx^*).
\ee
 Then $f\in \Tr(2\bN,d)$, $f(\xi^\nu)=0$, $\nu=1,\dots,m$, and
\be\label{R19}
\|f\|_q \le \|g_\xi\|_\infty \|\cK_\bN\|_q \le C_1(d)\vartheta(\bN)^{1-1/q}.
\ee
At the last step we used the known bound for the $L_q$ norm of the Fej\'er kernel (see \cite{VTbookMA}, p.83, (3.2.7)). By (\ref{FKm}) we get
\be\label{R20}
|f(\bx^*)| \ge C_2(d) \vartheta(\bN).
\ee
By the Nikol'skii inequality for the $\Tr(2\bN,d)$ (see  \cite{VTbookMA}, p.90, Theorem 3.3.2) we obtain 
from (\ref{R20}) 
\be\label{R21}
\|f\|_p \ge C_3(d) \vartheta(\bN)^{1-1/p}.
\ee
 
Let $\cM$ be a mapping from $\bbC^m$ to $L_p$. Denote $g_0 := \cM(\mathbf 0)$. Then 
$$ 
\|f -g_0\|_p +\|-f - g_0\|_p \ge 2\|f\|_p.
$$
This, relations (\ref{R19}), (\ref{R21}), and the fact that both $f/\|f\|_q$ and $-f/\|f\|_q$ belong to $\Tr(2\bN,d)_q$ complete the proof of Lemma \ref{NLL1}.

\end{proof}

We now show how Theorem \ref{NUT2} can be used in proving that Theorem \ref{IT2} cannot be substantially improved in the sense of relations between $m$ and $u$. 

\begin{Proposition}\label{NLP1} Let $d\in \N$ and $p\in (2,\infty)$. There exists a positive constant $C(d,p)$
with the following property. For any 
$$
\D_N = \{e^{i(\bk,\bx)},\,\bk: |k_j|\le 2N_j, j=1,\dots,d  \}
$$ 
and $m\le \vartheta(\bN)/2$ there is no set of $m$ 
points,  which provides the one-sided $L_p(\T^d,\mu)$-universal discretization (\ref{D6}) for $\cX_{2v}(\D_N)$ with 
$$
v \ge C(d,p)(2D+1)^2 (\vartheta(\bN))^{2/p}.
$$
Here, $\mu$ is the normalized Lebesgue measure on $\T^d$. 
\end{Proposition}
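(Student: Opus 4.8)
The plan is to argue by contradiction, exhibiting a single $2v$-sparse ``spike'' on which the one-sided discretization (\ref{D6}) must fail once $v$ is too large; this is precisely the assertion that the relation $m\gtrsim u^{p/2}$ of Theorem \ref{IT2} (with $u=2v$) is sharp. Suppose, then, that some $\xi=\{\xi^j\}_{j=1}^m$ with $m\le\vartheta(\bN)/2$ provides (\ref{D6}) for $\cX_{2v}(\D_N)$ with $v\ge C(d,p)(2D+1)^2\vartheta(\bN)^{2/p}$, where $C(d,p)$ is chosen large at the end. For a frequency set $A\subset\{\bk:\ |k_j|\le 2N_j\}$ with $|A|=2v$ (which exists in the nontrivial case $2v\le\vartheta(2\bN)$) and a center $\bx_0\in\T^d$, set $f_{A,\bx_0}(\bx):=\sum_{\bk\in A}e^{i(\bk,\bx-\bx_0)}$; each such function lies in $\Sigma_{2v}(\D_N)$, so (\ref{D6}) applies to it.

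The peak is deterministic: $f_{A,\bx_0}(\bx_0)=2v$, and since $|\partial_{x_j}f_{A,\bx_0}|\le 2N_j\cdot 2v$, a first-order Bernstein-type estimate shows that $|f_{A,\bx_0}|\ge v$ on a box $\bx_0+Q$, where $Q$ is centered at $\mathbf 0$ with $\vol(Q)$ comparable to $1/\vartheta(\bN)$, and this holds for \emph{every} $A$. Hence $\|f_{A,\bx_0}\|_p^p\ge v^p\vol(Q)\ge c(d)v^p/\vartheta(\bN)$; this spike contribution is the source of the exponent $2/p$. Next I place the spike away from the samples: since $\vol(Q)\le c'(d)/\vartheta(\bN)$ and $m\le\vartheta(\bN)/2$, the set of centers $\bx_0$ for which some $\xi^j$ falls in $\bx_0+Q$ has measure at most $m\,\vol(Q)\le 1/2$, so the set $G$ of \emph{good} centers (no $\xi^j-\bx_0\in Q$) satisfies $\vol(G)\ge 1/2$. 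For a good center every $\xi^j-\bx_0\notin Q$, so by translation invariance $\int_G\frac1m\sum_j|f_{A,\bx_0}(\xi^j)|^p\,d\bx_0\le\int_{\complement Q}|f_{A,\mathbf 0}|^p$.

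It remains to choose $A$ so that the off-peak mass $\int_{\complement Q}|f_{A,\mathbf 0}|^p$ is only $\lesssim_{d,p} v^{p/2}$. Selecting $A$ as a uniform random $2v$-subset of the box, one has $\|f_{A,\mathbf 0}\|_2^2=2v$, while away from the origin $f_{A,\mathbf 0}$ is a random exponential sum of typical size $\sqrt v$, so in expectation $\int_{\complement Q}|f_{A,\mathbf 0}|^p\lesssim_{d,p}v^{p/2}$; fix an $A$ realizing this bound. Averaging the displayed inequality over $G$ (using $\vol(G)\ge 1/2$) produces a good center $\bx_0\in G$ with $\|f_{A,\bx_0}\|_{L_p(\mu_m)}^p\le 2\int_{\complement Q}|f_{A,\mathbf 0}|^p\le C(d,p)v^{p/2}$. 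Now (\ref{D6}) forces $\|f_{A,\bx_0}\|_p^p\le D^p\|f_{A,\bx_0}\|_{L_p(\mu_m)}^p$, whence $c(d)v^p/\vartheta(\bN)\le D^pC(d,p)v^{p/2}$, i.e. $v^{p/2}\le C'(d,p)D^p\vartheta(\bN)$, i.e. $v\le C''(d,p)D^2\vartheta(\bN)^{2/p}$. Choosing $C(d,p)>C''(d,p)$ in the hypothesis (and using $2D+1>D$) contradicts $v\ge C(d,p)(2D+1)^2\vartheta(\bN)^{2/p}$, which proves the proposition.

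The peak lower bound and the covering argument for good centers are routine; the delicate point, and the only place where $p>2$ is genuinely used, is the log-free bulk bound $\int_{\complement Q}|f_{A,\mathbf 0}|^p\lesssim_{d,p} v^{p/2}$. Here one must rule out secondary peaks of the random exponential sum, so a true $p$-th moment estimate (via hypercontractivity or a chaining argument) is needed rather than the crude $\|\cdot\|_\infty$ bound, which would only yield the weaker threshold $v\lesssim m$. I note that the same extremal spike underlies Lemma \ref{NLL1}, and that the conclusion can equally be phrased through the recovery bound of Theorem \ref{NUT2} combined with that lemma; the direct discretization argument above is, however, the cleanest route to the stated constants.
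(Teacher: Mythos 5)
Your steps (1)--(5) and (7) are fine: the Bernstein-type peak bound, the measure count showing that a good center $\bx_0$ exists, and the averaging over shifts are all correct. The fatal problem is exactly at the point you yourself flag as delicate: the bulk bound $\int_{\T^d\setminus Q}|f_{A,\mathbf 0}|^p\,d\mu\lesssim_{d,p}v^{p/2}$ is \emph{false} for a uniformly random $2v$-subset $A$ of the frequency box, in precisely the regime $v\gg\vartheta(\bN)^{2/p}$ where you need it. Write $f_A=\bE f_A+Z_A$. The fluctuation $Z_A$ indeed has $p$-th moments of order $v^{p/2}$, but the mean is a scaled Dirichlet kernel, $\bE f_A=\frac{2v}{\vartheta(2\bN)}\sum_{\bk:\,|k_j|\le 2N_j}e^{i(\bk,\bx)}$, and the Dirichlet kernel, unlike the Fej\'er kernel, has heavy $L_p$ tails: since $|D_{2\bN}(\bx)|$ behaves like $\prod_j\min(N_j,|x_j|^{-1})$, removing the box $Q$ of volume $\asymp 1/\vartheta(\bN)$ deletes only a constant proportion of its $L_p$ mass, so $\int_{\T^d\setminus Q}|\bE f_A|^p\,d\mu\asymp_{d,p}\bigl(\tfrac{2v}{\vartheta(2\bN)}\bigr)^p\vartheta(2\bN)^{p-1}\asymp v^p/\vartheta(\bN)$. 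Hence $\bE\int_{\T^d\setminus Q}|f_A|^p\gtrsim v^p/\vartheta(\bN)$, which dominates $v^{p/2}$ exactly when $v\gg\vartheta(\bN)^{2/p}$. Your final display then compares $c(d)v^p/\vartheta(\bN)$ with $D^p$ times a quantity of the same order $v^p/\vartheta(\bN)$, which is no contradiction at all. Worse, the defect is not repairable within the ``pure spike'' Ansatz: by Cauchy--Schwarz on the additive energy, every $A$ in the box with $|A|=2v$ has $\|f_A\|_4^4\ge c_d(2v)^4/\vartheta(\bN)$, and since the peak region carries at most $(2v)^4\vol(Q)$ of this, the bulk must carry $L_4$ mass $\gtrsim v^4/\vartheta(\bN)$; for $2<p\le 4$ the inequality $\int_{\T^d\setminus Q}|f_A|^4\le(2v)^{4-p}\int_{\T^d\setminus Q}|f_A|^p$ then shows that a bulk bound with constant $K$ forces $v\lesssim K^2\vartheta(\bN)^{2/p}$. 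So at the threshold $v\asymp D^2\vartheta(\bN)^{2/p}$ the flatness constant must satisfy $K\gtrsim D$, and your concluding inequality degrades to $v\lesssim D^2K^2\vartheta(\bN)^{2/p}\asymp D^4\vartheta(\bN)^{2/p}$, which does not contradict $v\ge C(d,p)(2D+1)^2\vartheta(\bN)^{2/p}$. No choice of $0/1$ coefficient sets can deliver the stated $(2D+1)^2$ dependence.

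The route you dismiss in your last sentence is the one that actually works, and it is the paper's proof: assume (\ref{D6}) holds, invoke Theorem \ref{NUT2} to obtain the Lebesgue-type inequality (\ref{D7}) for \emph{all} continuous $f$, and apply it to the non-sparse function $f=g_\xi\,\cK_\bN(\cdot-\bx^*)$ from Lemma \ref{NLL1}. That function vanishes at every sample point, so $B_v(f,\D_N,L_p(\xi))=0$ and the left-hand side of (\ref{D7}) is $\ge C_3(d)\vartheta(\bN)^{1-1/p}$ by (\ref{R21}); on the other hand its Wiener norm is $\lesssim\vartheta(\bN)$, so the Maurey-type bound (\ref{R1}) in the uniformly smooth space $L_p(\T^d,\mu_\xi)$ gives $\sigma_v(f,\D_N)_{L_p(\mu_\xi)}\lesssim\vartheta(\bN)(v+1)^{-1/2}$. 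Comparing the two sides of (\ref{D7}) yields $v+1\le C(d,p)(2D+1)^2\vartheta(\bN)^{2/p}$, the desired contradiction. The correct $D$-dependence comes for free there because the witness function is fixed independently of $D$, and the sparse object to which (\ref{D6}) is actually applied is the \emph{difference of two near-best $v$-term approximants} (inside the proof of Theorem \ref{NUT2}), not a spike with $0/1$ coefficients; this structural difference is exactly what your construction cannot emulate.
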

\begin{proof} The proof goes by contradiction. Assume that  there exists a set $\xi:= \{\xi^j\}_{j=1}^m \subset \T^d $, which provides the one-sided $L_p$-universal discretization (\ref{D6})  for $\cX_{2v}(\D_N)$. 
Then, by Theorem \ref{NUT2} inequality (\ref{D7}) holds for any $f\in\cC(\T^d)$. We take the function $f$ from the proof of Lemma \ref{NLL1} defined in (\ref{Rf}). By the definition of $f$ we get $f(\xi^j)=0$ for all $j=1,\dots,m$. Therefore, $B_v(f,\D_N,L_p(\xi))=0$ and by (\ref{R21}) we obtain
\be\label{D9}
 \|f-B_v(f,\D_N,L_p(\xi))\|_p \ge C_3(d)\vartheta(\bN)^{1-1/p}.
\ee
On the other hand by (\ref{R19}) with $q=2$ we get
\be\label{D10}
\sum_{\bk\in \Z^d} |\hat f(\bk)| \le \vartheta(2\bN)^{1/2} \|f\|_2^{1/2} \le C_1(d) \vartheta(2\bN)^{1/2}\vartheta(\bN)^{1/2} \le C_1'(d)\vartheta(\bN). 
\ee
By (\ref{CG2}) we obtain that $\eta(L_p(\T^d,\mu_\xi),w) \le (p-1)w^2/2$. 
It is known (see, for instance, \cite{VTbook}, p.342) that for any dictionary $\D = \{g\}$, $\|g\|_X \le 1$ in a Banach  space $X$ with $\eta(X,w) \le \gamma w^q$, $1<q\le 2$, we have
 \be\label{R1}
 \sigma_v(A_1(\D),\D)_X \le C(q,\gamma)(v+1)^{1/q -1}.
 \ee
 Here
 $$
 A_1(\D) := \left\{f:\, f=\sum_{i=1}^\infty a_ig_i,\quad g_i\in \D,\quad \sum_{i=1}^\infty |a_i| \le 1 \right\}.
 $$
 
 Note that (\ref{R1}) is proved in \cite{VTbook} in the case of real Banach spaces. It is clear that we can get (\ref{R1}) for the complex $\D_N$ from the corresponding results for the real trigonometric polynomials from $\Tr(2\bN,d)$.
 
We now apply the   
inequality (\ref{R1}) and find from (\ref{D10})
\be\label{D11}
\sigma_v(f,\D_N)_{L_p(\T^d, \mu_\xi)} \le C_1(d,p)\vartheta(\bN) (v+1)^{-1/2}.
\ee
Substituting (\ref{D9}) and (\ref{D11}) into (\ref{D7}) we find
$$
v+1 \le C(d,p)(2D+1)^2 \vartheta(\bN)^{2/p}
$$
with some positive constant $C(d,p)$. We choose this constant $C(d,p)$ and complete the proof. 
\end{proof}
  
 \section{Optimal recovery on function classes}
 \label{R}
 
 The following Theorem \ref{RT1} was proved in \cite{VT202}.
 
  \begin{Theorem}[{\cite{VT202}}]\label{RT1} Let $1<p<\infty$, $r>0$, $\bt\in (0,1]$. Assume that $\|\psi_\bk\|_{L_p(\Omega,\mu)} \le B$, $\bk\in\Z^d$, with some probability measure $\mu$ on $\Omega$. Then there exist two constants $c^*=c(r,\bt,p,d)$ and $C=C(r,\bt,p,d)$ such that 
 for any $v\in \N$ there is a $J\in\N$ independent of the measure $\mu$, $2^J \le v^{c^*}$, with the property ($p^*:= \min\{p,2\}$)
 $$
 \sigma_v(\bA^r_\bt(\Psi),\Psi_J)_{L_p(\Omega,\mu)} \le CBv^{1/p^* -1/\bt-r/d}, \qquad \Psi_J := \{\psi_\bk\}_{\|\bk\|_\infty <2^J}.
 $$
 Moreover, this bound is provided by a simple greedy algorithm.
 \end{Theorem}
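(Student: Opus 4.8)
The plan is to exploit the dyadic block structure built into the definition of $\bA^r_\bt(\Psi)$ together with the greedy $A_1$-estimate \eqref{R1} for uniformly smooth spaces. Write $f=\sum_{j\ge 0}f_j$, where $f_j:=\sum_{[2^{j-1}]\le \|\bk\|_\infty<2^j}a_\bk(f)\psi_\bk$ is the $j$-th block; the block contains $\asymp 2^{jd}$ indices, and the defining condition \eqref{IAr} bounds the $\ell_\bt$-quasinorm of its coefficient vector by $2^{-rj}$. The conceptually central step is a per-block estimate: for $j\le J$ one can approximate $f_j$ from $\Psi_J$ with $n$ terms so that
$$
\sigma_n(f_j,\Psi_J)_{L_p}\le CB\,2^{-rj}n^{1/p^*-1/\bt}.
$$

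To prove this I would use that $L_p(\Omega,\mu)$ is uniformly smooth with $\eta(L_p,w)\le \gamma w^{p^*}$ by \eqref{CG2}, so \eqref{R1} applies with $q=p^*$ and gives $\sigma_n(A_1(\D),\D)_{L_p}\ll (n+1)^{1/p^*-1}$ for the normalized dictionary $\D=\{\psi_\bk/B\}$. When $\bt=1$ the elementary inequality $\|a\|_{\ell_1}\le \|a\|_{\ell_\bt}$ places the block directly in an $A_1$-ball of radius $\ll B2^{-rj}$, and \eqref{R1} finishes it. When $\bt<1$ I would first threshold: keeping the $n$ largest coefficients and using the rearrangement bound $a^*_l\le 2^{-rj}l^{-1/\bt}$, the discarded tail has $\ell_1$-norm $\ll 2^{-rj}n^{1-1/\bt}$, so the residual lies in an $A_1$-ball of radius $\ll B2^{-rj}n^{1-1/\bt}$; a further application of \eqref{R1} with $n$ more terms contributes $\ll B2^{-rj}n^{1-1/\bt}(n+1)^{1/p^*-1}\asymp B2^{-rj}n^{1/p^*-1/\bt}$. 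Since each $f_j$ lies in the span of $\Psi_J$ for $j\le J$, the approximants may be taken from $\Psi_J$, and because the smoothness constant $\gamma$ is independent of $\mu$, so is this estimate.

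Next I would truncate and allocate. Choosing $J$ with $2^J\asymp v^{c^*}$ for a suitable $c^*=c^*(r,\bt,p,d)$, the high-frequency tail is controlled crudely by $\|\sum_{j>J}f_j\|_{L_p}\le B\sum_{j>J}2^{-rj}\ll B2^{-rJ}$, which is dominated by the target $v^{1/p^*-1/\bt-r/d}$ once $c^*$ is large enough. For the remaining blocks I would include the low blocks $j\le j_1$ in full (zero error, using $\asymp 2^{j_1d}$ terms) with $2^{j_1}\asymp v^{1/d}$, and taper the budget geometrically, $n_j\asymp 2^{j_1d}2^{-\alpha(j-j_1)}$, across the higher blocks at a rate $0<\alpha<r/s$, where $s:=1/\bt-1/p^*>0$. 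Then $\sum_j n_j\ll v$, and summing the per-block errors $B2^{-rj}n_j^{-s}$ produces a geometric series dominated by its transition term $\asymp B2^{-j_1(ds+r)}\asymp Bv^{-s-r/d}=Bv^{1/p^*-1/\bt-r/d}$; the same condition $\alpha<r/s$ guarantees that the blocks tapered below one term, together with the truncated tail, also contribute $\ll Bv^{-s-r/d}$.

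Finally, since $J$ is fixed from $v$ and the parameters alone and \eqref{R1} is realized constructively by a greedy procedure (e.g.\ the relaxed or Chebyshev greedy algorithm), the whole approximant is produced by a simple greedy scheme, which gives the ``moreover'' claim. I expect the main obstacle to be the per-block estimate: correctly coupling the $\ell_\bt$-sparsity of the block with the $L_p$-smoothness rate from \eqref{R1} (the threshold-then-greedy two-step for $\bt<1$) while keeping all constants independent of $\mu$. By comparison, the choice of truncation level and the geometric allocation are routine bookkeeping once the per-block bound is in hand, and the optimization over $\alpha$ is elementary.
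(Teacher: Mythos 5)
Your argument is correct and is essentially the same as the proof in the cited source: Theorem \ref{RT1} is stated in this paper without proof (it is imported from \cite{VT202}), and the argument there follows exactly your route --- dyadic block decomposition of $f\in\bA^r_\bt(\Psi)$, a per-block estimate obtained by thresholding the $\ell_\bt$-bounded coefficients and then applying the $A_1$-greedy bound \eqref{R1} in the uniformly smooth space $L_p$ (with $q=p^*$ from \eqref{CG2}), followed by a geometric budget allocation across blocks and a crude truncation at level $J$ with $2^J\asymp v^{c^*}$, all of which keeps the constants independent of $\mu$. The only detail worth adding is the real-versus-complex caveat for \eqref{R1}, which this paper itself flags in the proof of Proposition \ref{NLP1}; handling it is routine.
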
 
 
 We now proceed to a new result on sampling recovery. In the proof of  Theorem \ref{RT3} below we need the following known result on the universal discretization.
 
  \begin{Theorem}[{\cite{DTM2}}]\label{RT2} Let $1\le p\le 2$. Assume that $ \D_N=\{\ff_j\}_{j=1}^N\subset \cC(\Og)$ is a  system  satisfying  the conditions  \eqref{ub} and   \eqref{Bessel} for some constant $K\ge 1$. Let $\xi^1,\cdots, \xi^m$ be independent 
 	random points on $\Og$  that are  identically distributed  according to  $\mu$. 
 	 Then there exist constants  $C=C(p)>1$ and $c=c(p)>0$ such that 
 	  given any   integers  $1\leq u\leq N$ and 
 	 $$
 	 m \ge  C Ku \log N\cdot (\log(2Ku ))^2\cdot (\log (2Ku )+\log\log N),
 	 $$
 	 the inequalities 
 	 \begin{equation}\label{Ex2}
 	 \frac{1}{2}\|f\|_p^p \le \frac{1}{m}\sum_{j=1}^m |f(\xi^j)|^p \le \frac{3}{2}\|f\|_p^p,\   \   \ \forall f\in  \Sigma_u(\D_N)
 	 \end{equation}
 hold with probability $\ge 1-2 \exp\Bl( -\f {cm}{Ku\log^2 (2Ku)}\Br)$.
\end{Theorem}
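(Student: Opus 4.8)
The plan is to prove this as an $L_p$-analogue of the restricted isometry property (RIP) for bounded orthonormal/Bessel systems, following the standard empirical-process strategy: first establish a Nikol'skii inequality that bounds the summands, then control the deviation of the empirical $L_p$-norm \emph{uniformly} over $\Sigma_u(\D_N)$ by symmetrization, contraction, and a chaining estimate, and finally upgrade the resulting expectation bound to the high-probability statement \eqref{Ex2} via Talagrand's concentration inequality.

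First I would record a Nikol'skii-type inequality on $\Sigma_u(\D_N)$. Writing $f=\sum_{i\in A}c_i\ff_i$ with $|A|\le u$, the Bessel condition \eqref{Bessel} gives $\sum_{i\in A}|c_i|^2\le K\|f\|_2^2$, and the uniform bound \eqref{ub} together with Cauchy--Schwarz gives $\|f\|_\infty\le\sum_{i\in A}|c_i|\le (Ku)^{1/2}\|f\|_2$. Interpolating $\|f\|_2^2\le\|f\|_\infty^{2-p}\|f\|_p^p$ (valid for $p\le 2$) then yields the clean bound $\|f\|_\infty\le (Ku)^{1/p}\|f\|_p$ for every $f\in\Sigma_u(\D_N)$. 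This controls both the range and the variance of the summands $|f(\xi^j)|^p$: for $\|f\|_p=1$ one gets $\||f|^p\|_\infty\le Ku$ and $\operatorname{Var}(|f(\xi)|^p)\le\|f\|_{2p}^{2p}\le Ku$, so a single-point Bernstein inequality already yields a deviation tail of order $\exp(-cm/(Ku))$, matching (up to the logarithmic correction) the probability claimed in the theorem.

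The core of the argument is uniformity over the union of subspaces $\Sigma_u(\D_N)$, which is where the factor $\log N$ enters. I would bound the expected deviation $\mathbb E\sup_{f}\bigl|\frac1m\sum_j(|f(\xi^j)|^p-\|f\|_p^p)\bigr|$ over the $L_p$-unit ball $W$ of $\Sigma_u(\D_N)$ by symmetrization, reducing to a Rademacher process; applying the contraction principle to strip the map $t\mapsto|t|^p$ (Lipschitz with constant $p(Ku)^{(p-1)/p}$ on the relevant range) leaves the linear process $\mathbb E\sup_{f\in W}|\frac1m\sum_j\varepsilon_j f(\xi^j)|$; and this is estimated by a Dudley / Rudelson--Vershynin chaining over the sparse ball $W$, whose covering numbers combine the $\binom{N}{u}$ choices of support (contributing $u\log N$) with the metric entropy of an individual $u$-dimensional ball. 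Talagrand's (Bousquet's) concentration inequality for suprema of empirical processes then converts the expectation bound into the stated deviation bound, and the two-sided inequality \eqref{Ex2} is exactly the event that this supremum is at most $1/2$.

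The main obstacle is the chaining step that must produce a bound \emph{linear in $u$} rather than quadratic. The crude route --- bounding $\sup_{f\in W}|\langle c,w\rangle|$ by $\|c\|_1\max_i|w_i|$ together with $\|c\|_1\le\sqrt u\,\|c\|_2$ --- loses a full factor of $u$ and yields only $m\gtrsim Ku^2\log N$; the same inflation occurs if one applies generic chaining after estimating all increments through the worst-case $L_\infty$ bound. Obtaining the sharp $m\asymp Ku\log N$ requires the genuine multiscale chaining carried out in two metrics (the $L_2$ metric and a weak $\ell_\infty$-type metric) so as to exploit the sparsity of $W$, and it is precisely this chaining, with the careful integration of entropy across scales, that produces the refined polylogarithmic factors $(\log(2Ku))^2(\log(2Ku)+\log\log N)$ in the hypothesis on $m$. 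Adapting this RIP-type machinery from the $L_2$ setting to the $L_p$ norm, through the Nikol'skii inequality above, is the technical heart of the proof.
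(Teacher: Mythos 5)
First, a point of orientation: this paper does not prove Theorem \ref{RT2} at all --- it is imported verbatim from \cite{DTM2} and used as a black box in the proof of Theorem \ref{RT3} --- so your proposal can only be measured against the argument in that cited work, not against anything internal to the paper. Your opening step is correct and does coincide with the standard first ingredient there: for $f=\sum_{i\in A}c_i\ff_i\in\Sigma_u(\D_N)$, Bessel plus Cauchy--Schwarz gives $\|f\|_\infty\le (Ku)^{1/2}\|f\|_2$, and combining with $\|f\|_2^2\le\|f\|_\infty^{2-p}\|f\|_p^p$ yields the Nikol'skii inequality $\|f\|_\infty\le (Ku)^{1/p}\|f\|_p$; the single-function Bernstein estimate that follows from it is also fine.

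The genuine gap is in your core reduction, and it is structural, not a matter of deferred technicalities. After symmetrization you apply the contraction principle to strip $t\mapsto|t|^p$, paying the Lipschitz constant $L\asymp p(Ku)^{(p-1)/p}$, and you then intend to recover the sharp dependence on $u$ by a refined two-metric chaining of the \emph{linear} process $\mathbb E\sup_{f\in W}|\frac1m\sum_j\varepsilon_jf(\xi^j)|$. This cannot work, because the linear process is genuinely as large as your ``crude'' bound says: writing $w_i=\frac1m\sum_j\varepsilon_j\ff_i(\xi^j)$, the supremum over $u$-sparse coefficient vectors with $\|c\|_2\le R$ \emph{equals} $R$ times the square root of the sum of the $u$ largest $|w_i|^2$, and already for the trigonometric system with i.i.d.\ points this is of order $R\sqrt{u\log(N/u)/m}$ --- i.e.\ the H\"older estimate $\|c\|_1\max_i|w_i|\le\sqrt u\,\|c\|_2\max_i|w_i|$ is sharp up to replacing $\log N$ by $\log(N/u)$, and no chaining can go below the true expectation. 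Since on the unit $L_p$-ball of $\Sigma_u(\D_N)$ one only has $R\le\sqrt K(Ku)^{1/p-1/2}$, the contraction route forces $LR\sqrt{u\log(N/u)/m}\lesssim 1$, i.e.\ $m\gtrsim K^2u^2\log(N/u)$: the factor of order $Ku$ is lost at the contraction step itself, not (as you diagnose) in a suboptimal estimation of the linear process afterwards. This is precisely why the RIP literature you invoke (Rudelson--Vershynin, Haviv--Regev) never reduces to a linear process: there the quadratic form is chained directly, exploiting the multiplicative structure of the increments $|f|^2-|g|^2=(f+g)(f-g)$. To repair your plan for $1\le p\le 2$ you must likewise keep the $p$-th power process intact and chain it directly, controlling increments via $\bigl||a|^p-|b|^p\bigr|\le p\max(|a|,|b|)^{p-1}|a-b|$ together with $L_\infty$-entropy estimates for the sparse unit ball (the union over the $\binom{N}{u}$ supports is what produces $u\log N$) and Bernstein-type concentration along the chain; this is, in essence, how the proof in \cite{DTM2} and the conditional discretization theorems it relies on proceed.
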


 \begin{Theorem}\label{RT3}  Assume that $\Psi$ is a uniformly bounded Riesz system (in the space  $L_2(\Omega,\mu)$) satisfying (\ref{ub}) and \eqref{Riesz} for some constants $0<R_1\leq R_2<\infty$.
Let $2\le p<\infty$  and $r>0$. For any $v\in\N$ denote $u := \lceil (1+c)v\rceil$, where $c$ is from Theorem \ref{NUT1}.  Assume in addition that $\Psi \in NI(2,p,H,u)$ with $p\in [2,\infty)$.
 There exist constants $c'=c'(r,\bt,p,R_1,R_2,d)$ and $C'=C'(r,\bt,p,d)$ such that   we have the bound   
 \begin{equation}\label{R4s}
 \varrho_{m}^{o}(\bA^r_\bt(\Psi),L_p(\Omega,\mu)) \le C' H v^{1/2 -1/\bt -r/d}   
\end{equation}
for any $m$ satisfying 
$$
m\ge c'    v  (\log(2v ))^4 .
$$
Moreover, this bound is provided by the WOMP. 
\end{Theorem}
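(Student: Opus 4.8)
The plan is to combine three ingredients: the near-optimal $v$-term approximation bound of Theorem \ref{RT1}, which is \emph{uniform over probability measures}; the existence of good $L_2$-universal discretization points from Theorem \ref{RT2}; and the WOMP-based sampling recovery inequality \eqref{mp3n} of Theorem \ref{NUT1}. Fix a weakness parameter $t\in(0,1]$ and let $c=c(t,R_1,R_2)$ and $u=\lceil(1+c)v\rceil$ be as in Theorem \ref{NUT1}. First I would invoke Theorem \ref{RT1} (with $p^*=2$, since $p\ge 2$) to produce an integer $J$, independent of the underlying measure, with $2^J\le v^{c^*}$, so that the finite system $\Psi_J=\{\psi_\bk\}_{\|\bk\|_\infty<2^J}$ of cardinality $N\asymp 2^{Jd}\le v^{c^*d}$ carries the estimate $\sigma_v(\bA^r_\bt(\Psi),\Psi_J)_{L_p(\Omega,\nu)}\le C v^{1/2-1/\bt-r/d}$ for \emph{every} probability measure $\nu$. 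Here I use that $\|\psi_\bk\|_\infty\le 1$ by \eqref{ub}, so the constant $B$ of Theorem \ref{RT1} may be taken to be $1$; I also enlarge $J$ if necessary so that $N\ge u$.

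Next, since the Riesz condition \eqref{Riesz} forces $\Psi_J$ to be a Bessel system with $K=R_1^{-2}$, I would apply Theorem \ref{RT2} with exponent $2$ to the $N$-dimensional system $\Psi_J$, taking the ambient measure there to be $\mu$. This yields (with positive probability, hence existence) a set $\Og_m=\{\xi^1,\dots,\xi^m\}$ providing the $L_2$-universal discretization \eqref{ud} for the collection $\cX_u(\Psi_J)$, valid as soon as $m\ge CKu\log N(\log(2Ku))^2(\log(2Ku)+\log\log N)$. Substituting $u\asymp v$, $\log N\ll\log v$, and $\log\log N\ll\log\log v$ shows this requirement is met for $m\ge c'v(\log(2v))^4$ with $c'=c'(r,\bt,p,R_1,R_2,d)$, which is exactly the hypothesis on $m$.

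With $\Og_m$ in hand, the measure $\mu_\xi=(\mu+\mu_m)/2$ of \eqref{muxi} is determined, and the restriction $\Psi_J\in NI(2,p,H,u)$ is inherited from $\Psi\in NI(2,p,H,u)$ because $\Sigma_u(\Psi_J)\subset\Sigma_u(\Psi)$. Thus Theorem \ref{NUT1} applies to the discretized dictionary $\Psi_J(\Og_m)$: for every $f_0\in\cC(\Omega)$ the algorithm $\text{WOMP}\bl(\Psi_J(\Og_m);t\br)_{L_2(\Omega_m,\mu_m)}$ produces, after $cv$ iterations, a residual $f_{cv}=f_0-G_{cv}(f_0,\Psi_J(\Og_m))$ obeying \eqref{mp3n}, namely $\|f_{cv}\|_{L_p(\Omega,\mu)}\le HC_1\sigma_v(f_0,\Psi_J)_{L_p(\Omega,\mu_\xi)}$. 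Applying the measure-uniform bound of the first step with $\nu=\mu_\xi$ gives $\sigma_v(f_0,\Psi_J)_{L_p(\Omega,\mu_\xi)}\le C v^{1/2-1/\bt-r/d}$ for every $f_0\in\bA^r_\bt(\Psi)$. Since the map $\cM\colon(f_0(\xi^1),\dots,f_0(\xi^m))\mapsto G_{cv}(f_0,\Psi_J(\Og_m))$ depends only on the sample values, this exhibits a recovery operator achieving $\varrho_m^o(\bA^r_\bt(\Psi),L_p(\Omega,\mu))\le C'Hv^{1/2-1/\bt-r/d}$ with $C'=C_1C=C'(r,\bt,p,d)$, and the bound is realized by the WOMP.

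The step I expect to be most delicate is the interface between approximation and discretization, i.e.\ the fact that the approximation estimate must be applied with the \emph{data-dependent} measure $\mu_\xi$. This is precisely why the measure-independence of $J$ in Theorem \ref{RT1} is essential: it lets me fix $\Psi_J$ \emph{before} choosing the discretization points, thereby avoiding the circularity of having $\Psi_J$ depend on $\mu_\xi$ while $\mu_\xi$ depends on the points selected to discretize $\Psi_J$. The only remaining bookkeeping requiring care is the dependence of constants, ensuring $C'$ depends only on $r,\bt,p,d$ (through $C_1$ and the constant of Theorem \ref{RT1}) while the sampling-budget constant $c'$ absorbs the Riesz constants via $K=R_1^{-2}$ and $u\asymp v$.
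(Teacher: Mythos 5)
Your proposal is correct and follows essentially the same route as the paper's own proof: fix the measure-independent system $\Psi_J$ via Theorem \ref{RT1}, obtain the $L_2$-universal discretization points from Theorem \ref{RT2} (with $p=2$, $K=R_1^{-2}$), apply the WOMP bound \eqref{mp3n} of Theorem \ref{NUT1}, and then bound $\sigma_v(f_0,\Psi_J)_{L_p(\Omega,\mu_\xi)}$ by invoking Theorem \ref{RT1} again with the data-dependent measure $\mu_\xi$, using \eqref{ub} to take $B=1$. Your explicit remarks on the inheritance $\Sigma_u(\Psi_J)\subset\Sigma_u(\Psi)$ for the Nikol'skii condition and on the recovery map depending only on the sample values are minor elaborations of steps the paper leaves implicit.
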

\begin{proof} First, we use Theorem \ref{RT1} in the space $L_p(\Omega,\mu)$. In our case $B=1$ and $2<p<\infty$, which implies that 
$p^*=2$. Consider the system $\Psi_J := \{\psi_\bk\}_{\|\bk\|_\infty <2^J}$, $2^J \le v^{c^*}$, from Theorem 
\ref{RT1}. Note, that $J$ does not depend on $\mu$. 
 By Theorem \ref{NUT1} with $\D_N = \Psi_J$ and Theorem \ref{RT2} with $p=2$ and $K=R_1^{-2}$  there exist $m$ points  $\xi^1,\cdots, \xi^m\in  \Og$  with 
\be\label{mbound}
		m\leq C       u (\log N)^4,
\ee  
such that  for any given $f_0\in \cC(\Omega)$,  the WOMP with weakness parameter $t$ applied to $f_0$ with respect to the system  $\D_N(\Omega_m)$ in the space $L_2(\Omega_m,\mu_m)$ provides for 
any integer $1\le v \le u/(1+c)$
\be\label{R5}
	\|f_{cv} \|_{L_p(\Omega,\mu)} \le C'H \sigma_v(f_0,\CD_N)_{L_p(\Og, \mu_\xi)}.
	\ee
In order to bound the right side of (\ref{R5}) we apply Theorem \ref{RT1} in the space $L_p(\Og, \mu_\xi)$. 
For that it is sufficient to check that $\|\psi_\bk\|_{L_p(\Og, \mu_\xi)} \le 1$, $\bk\in\Z^d$. This follows from the assumption that $\Psi$ satisfies (\ref{ub}). Thus, by Theorem \ref{RT1} we obtain for $f_0 \in \bA^r_\bt(\Psi)$
\be\label{R6}
 \sigma_v(f_0,\Psi_J)_{L_p(\Og, \mu_\xi)} \le CHv^{1/2 -1/\bt-r/d}.
 \ee
  Combining (\ref{R6}), (\ref{R5}), and taking into account (\ref{mbound}) and the bound
  $$
  N\le 2^{d(J+1)} \le 2^d v^{c^*},
  $$
   we complete the proof.

\end{proof}
 
\begin{Corollary}\label{RC1}  Assume that $\Psi$ is a uniformly bounded Riesz system (in the space  $L_2(\Omega,\mu)$) satisfying (\ref{ub}) and \eqref{Riesz} for some constants $0<R_1\leq R_2<\infty$.
Let $2\le p<\infty$  and $r>0$.  
 There exist constants $c=c(r,\bt,p,R_1,R_2,d)$ and $C=C(r,\bt,p,d)$ such that   we have the bound   
 \begin{equation}\label{R4w}
 \varrho_{m}^{o}(\bA^r_\bt(\Psi),L_p(\Omega,\mu)) \le C  v^{1-1/p -1/\bt -r/d}   
\end{equation}
for any $m$ satisfying 
$$
m\ge c   v  (\log(2v ))^4 .
$$
Moreover, this bound is provided by the WOMP.
\end{Corollary}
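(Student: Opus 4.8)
The plan is to derive Corollary \ref{RC1} directly from Theorem \ref{RT3} by eliminating the explicit appearance of the Nikol'skii constant $H$ and replacing the exponent $1/2-1/\bt-r/d$ with $1-1/p-1/\bt-r/d$. The key observation is that for a uniformly bounded Riesz system the $u$-term Nikol'skii inequality $\Psi\in NI(2,p,H,u)$ holds automatically with an explicit constant $H$ that grows like a power of $u$, and this growth is exactly what converts the $L_2$-type exponent into the $L_p$-type exponent.

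First I would establish the Nikol'skii inequality for the pair $(2,p)$ on $\Sigma_u(\Psi)$. For any $f=\sum_{i\in A}c_i\ff_i$ with $|A|\le u$, Condition A (the bound \eqref{ub}) together with the lower Riesz inequality in \eqref{Riesz} gives $\|f\|_\infty\le\sum_{i\in A}|c_i|\le u^{1/2}(\sum_{i\in A}|c_i|^2)^{1/2}\le u^{1/2}R_1^{-1}\|f\|_2$, using Cauchy--Schwarz and then \eqref{Riesz}. Interpolating between this sup-norm bound and the trivial bound $\|f\|_p\le\|f\|_p$ via $\|f\|_p^p\le\|f\|_\infty^{p-2}\|f\|_2^2$ yields $\|f\|_p\le\|f\|_\infty^{1-2/p}\|f\|_2^{2/p}\le(u^{1/2}R_1^{-1})^{1-2/p}\|f\|_2$. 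Hence $\Psi\in NI(2,p,H,u)$ with $H\asymp u^{(1/2)(1-2/p)}=u^{1/2-1/p}$, where the implied constant depends only on $R_1$ and $p$.

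Next I would feed this into Theorem \ref{RT3}. Since $u=\lceil(1+c)v\rceil\asymp v$, we have $H\ll v^{1/2-1/p}$ with a constant depending on $R_1$ and $p$. Substituting into the bound \eqref{R4s} gives
\[
\varrho_m^o(\bA^r_\bt(\Psi),L_p(\Omega,\mu))\le C'Hv^{1/2-1/\bt-r/d}\ll v^{1/2-1/p}\,v^{1/2-1/\bt-r/d}=v^{1-1/p-1/\bt-r/d},
\]
which is exactly \eqref{R4w}. The sampling budget $m\ge c'v(\log(2v))^4$ is inherited verbatim from Theorem \ref{RT3}, and the fact that the bound is realized by the WOMP is likewise inherited, since the estimate \eqref{R5} in the proof of Theorem \ref{RT3} is produced by running the WOMP on the discretized dictionary.

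The main obstacle is the Nikol'skii inequality step: I need the constant $H$ to have exactly the order $u^{1/2-1/p}$, because any larger power would degrade the exponent in \eqref{R4w}, and a smaller power is not available in general. The proof above via the three-norm chain $\|f\|_p\le\|f\|_\infty^{1-2/p}\|f\|_2^{2/p}$ is the natural route, but one must be careful that the sup-norm bound uses both Condition A and the lower Riesz bound, and that the resulting dependence of $H$ on $u$ is absorbed cleanly into the final constant $C=C(r,\bt,p,R_1,R_2,d)$ rather than leaking into the exponent. Everything else is a routine substitution, since Theorem \ref{RT3} has already done the heavy lifting of combining the universal discretization (Theorem \ref{RT2}) with the Lebesgue-type inequality for the WOMP (Theorem \ref{NUT1}) and the greedy approximation bound (Theorem \ref{RT1}).
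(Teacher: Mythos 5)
Your proposal is correct and follows essentially the same route as the paper: establish $\Psi\in NI(2,p,H,u)$ with $H\ll u^{1/2-1/p}$ via the chain $\|f\|_\infty\le\sum|c_i|\le u^{1/2}(\sum|c_i|^2)^{1/2}\ll u^{1/2}\|f\|_2$ followed by $\|f\|_p\le\|f\|_2^{2/p}\|f\|_\infty^{1-2/p}$, and then plug this into Theorem \ref{RT3} using $u\asymp v$. In fact your version is slightly more careful than the paper's, which writes the coefficient bound with the constant $R_2$ (and an exponent typo) where the lower Riesz bound actually yields $R_1^{-1}$, exactly as you have it.
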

\begin{proof} We prove that a system $\Psi$ satisfying the conditions of Corollary \ref{RC1} also satisfies the condition $\Psi \in NI(2,p,H,u)$ with $H= C(R_2) u^{1/2-1/p}$, which is required in Theorem \ref{RT3}. Then Corollary \ref{RC1} follows from Theorem \ref{RT3}.
Indeed, let
$$
f := \sum_{\bk \in Q} c_\bk \psi_\bk,\qquad |Q| \le u.
$$
Then
$$
\|f\|_\infty \le \sum_{\bk \in Q} |c_\bk| \le u^{1/2} \left(\sum_{\bk \in Q} |c_\bk|^{1/2}\right)^{1/2} \le u^{1/2} R_2 \|f\|_2.
$$
Using the following well known simple inequality $\|f\|_p \le \|f\|_2^{2/p}\|f\|_\infty^{1-2/p}$, $2\le p\le \infty$, we obtain
$$
\|f\|_p \le R_2^{1-2/p}u^{1/2-1/p} \|f\|_2.
$$
\end{proof}

We derived Theorem \ref{RT3} from Theorem \ref{NUT1}. We now formulate an analog of Theorem \ref{RT3}, which can be derived from Theorem \ref{NUT3} in the same way as Theorem \ref{RT3} has been derived from Theorem \ref{NUT1}.

\begin{Theorem}\label{RT4} Let $2\le p<\infty$ and let $m$, $v$ be given natural numbers.    Let $\Psi$ be  a uniformly bounded Bessel system satisfying (\ref{ub}) and (\ref{Bessel}) such that $\Psi \in NI(2,p,H,2v)$. 
There exist constants $c=c(r,\bt,p,K,d)$ and $C=C(r,\bt,p,d)$ such that   we have the bound   
 \begin{equation}\label{R4ss}
 \varrho_{m}^{o}(\bA^r_\bt(\Psi),L_p(\Omega,\mu)) \le CH v^{1/2 -1/\bt -r/d}   
\end{equation}
for any $m$ satisfying 
$$
m\ge c   v  (\log(2v ))^4 .
$$
 \end{Theorem}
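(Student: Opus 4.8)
The plan is to follow the proof of Theorem \ref{RT3} almost verbatim, replacing the greedy-algorithm result Theorem \ref{NUT1} by the best $v$-term approximation result Theorem \ref{NUT3}. The conceptual payoff of this substitution is that Theorem \ref{NUT3} needs only one-sided $L_2$-discretization together with the Nikol'skii inequality, and never invokes a lower Riesz bound or any unconditionality; this is exactly what lets us weaken the hypothesis from Riesz to Bessel.

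First I would invoke Theorem \ref{RT1} in the space $L_p(\Omega,\mu)$. Since $2\le p<\infty$ we have $p^*=2$, and with $B=1$ this produces an index $J\in\N$ independent of $\mu$ with $2^J\le v^{c^*}$ for which the finite subsystem $\D_N:=\Psi_J=\{\psi_\bk\}_{\|\bk\|_\infty<2^J}$ satisfies $\sigma_v(\bA^r_\bt(\Psi),\Psi_J)_{L_p(\Omega,\mu)}\le Cv^{1/2-1/\bt-r/d}$ and $N\le 2^{d}v^{c^*}$. As a subsystem of $\Psi$, this $\D_N$ inherits the uniform boundedness (\ref{ub}), the Bessel inequality (\ref{Bessel}) with the same constant $K$, and the membership $\D_N\in NI(2,p,H,2v)$ (because $\Sigma_{2v}(\D_N)\subset\Sigma_{2v}(\Psi)$). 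Next I would generate the sampling points by applying Theorem \ref{RT2} with $p=2$ and $u=2v$: it yields $m$ points $\xi^1,\dots,\xi^m$ obeying the two-sided discretization (\ref{Ex2}) over $\cX_{2v}(\D_N)$ as soon as $m\ge CK(2v)\log N\cdot(\log(4Kv))^2\cdot(\log(4Kv)+\log\log N)$. Inserting $N\le 2^{d}v^{c^*}$ collapses every nested logarithm into a power of $\log(2v)$, so this requirement reduces to $m\ge c\,v\,(\log(2v))^4$ with $c=c(r,\bt,p,K,d)$, matching the stated condition on $m$. The lower half of (\ref{Ex2}) is precisely the one-sided hypothesis (\ref{D12}) of Theorem \ref{NUT3} with $D=2^{1/2}$.

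With these ingredients in place I would apply Theorem \ref{NUT3} to each $f_0\in\bA^r_\bt(\Psi)$, obtaining
$$
\|f_0-B_v(f_0,\D_N,L_2(\xi))\|_{L_p(\Omega,\mu)}\le 2^{1/p}(2DH+1)\,\sigma_v(f_0,\D_N)_{L_p(\Omega,\mu_\xi)}.
$$
To estimate the right-hand side I would run Theorem \ref{RT1} a second time, now in the space $L_p(\Omega,\mu_\xi)$; the only hypothesis to verify is $\|\psi_\bk\|_{L_p(\mu_\xi)}\le 1$, which is immediate from the uniform boundedness (\ref{ub}) and the definition of $\mu_\xi$. This gives $\sigma_v(f_0,\D_N)_{L_p(\Omega,\mu_\xi)}\le Cv^{1/2-1/\bt-r/d}$, and since $D=2^{1/2}$ and $H\ge 1$ the prefactor $2^{1/p}(2DH+1)$ is $\ll H$. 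Because $B_v(f_0,\D_N,L_2(\xi))$ depends on $f_0$ only through the sample values $f_0(\xi^1),\dots,f_0(\xi^m)$, it defines an admissible mapping in the definition of $\varrho^o_m$, and taking the supremum over $f_0\in\bA^r_\bt(\Psi)$ yields the claimed bound $\varrho^o_m(\bA^r_\bt(\Psi),L_p(\Omega,\mu))\le CHv^{1/2-1/\bt-r/d}$.

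The only genuine obstacle is bookkeeping rather than anything conceptual. One must confirm that the hypotheses of Theorem \ref{NUT3} survive the passage to the finite subsystem $\Psi_J$ — in particular the order relation $2v\le N$ (which holds once $v$ is large, the small-$v$ range being absorbed into the constants) and the inheritance of $NI(2,p,H,2v)$ — and that the sample count of Theorem \ref{RT2}, stated with logarithms nested in both $Kv$ and $N$, indeed telescopes to $c\,v\,(\log(2v))^4$ after substituting $N\le 2^{d}v^{c^*}$. The relaxation from Riesz to Bessel is then automatic, as the whole argument routes the spectral assumption solely through Theorem \ref{RT2}, whose discretization guarantee requires only the Bessel inequality (\ref{Bessel}).
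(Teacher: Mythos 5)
Your proposal is correct and is exactly the route the paper intends: the paper gives no separate proof of Theorem \ref{RT4}, saying only that it ``can be derived from Theorem \ref{NUT3} in the same way as Theorem \ref{RT3} has been derived from Theorem \ref{NUT1},'' and your argument carries this out faithfully --- Theorem \ref{RT1} in $L_p(\Omega,\mu)$ to fix $\Psi_J$, Theorem \ref{RT2} with $p=2$ and the Bessel constant $K$ to get the points (so $D=2^{1/2}$), Theorem \ref{NUT3} in place of Theorem \ref{NUT1}, and Theorem \ref{RT1} again in $L_p(\Omega,\mu_\xi)$, with the logarithm bookkeeping and the $2v\le N$ caveat handled appropriately. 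Your observation that this substitution is precisely what relaxes the Riesz hypothesis to Bessel (since \ref{NUT3} needs only one-sided $L_2$-discretization plus the Nikol'skii inequality, and \ref{RT2} needs only \eqref{Bessel}) correctly identifies the point of the theorem.
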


In the special case $\Psi = \Tr^d$ is the trigonometric system, Corollary \ref{RC1} gives 
 \begin{equation}\label{R7}
 \varrho_{m}^{o}(\bA^r_\bt(\Tr),L_p(\Omega,\mu)) \ll  v^{1-1/p -1/\bt -r/d} \quad\text{for}\quad  m\gg    v  (\log(2v ))^4.
\end{equation}

It is pointed out in \cite{DTM2} that known results on the RIP for the trigonometric system can be used 
for improving results on the universal discretization in the $L_2$ norm in the case of the trigonometric system. We explain that in more detail.   
 Let $M\in \N$ and $d\in \N$. Denote $\Pi(M) := [-M,M]^d$ to be the $d$-dimensional cube. Consider the system $\Tr^d(M) := \Tr^d(\Pi(M))$ of functions $e^{i(\bk,\bx)}$, $\bk \in \Pi(M)$ defined on $\T^d =[0,2\pi)^d$. Then $\Tr^d(M)$ is an orthonormal system in $L_2(\T^d,\mu)$ with $\mu$ being the normalized Lebesgue measure on $\T^d$. The cardinality of this system is $N(M):= |\Tr^d(M)| = (2M+1)^d$. We are interested in bounds on $m(\cX_v(\Tr^d(M)),2)$ in a special case, when $M\le v^c$ with some constant $c$, which may depend on $d$. Then Theorem \ref{RT2} with $p=2$
 gives 
 \be\label{DT1}
 m(\cX_v(\Tr^d(v^c)),2) \le C(c,d) v (\log (2v))^4.
 \ee
  It is  stated in \cite{DTM2} that the known results of \cite{HR} and \cite{Bour} allow us to improve the bound (\ref{DT1}):
 \be\label{HR1}
 m(\cX_v(\Tr^d(v^c)),2) \le C(c,d) v (\log (2v))^3.
 \ee
 This, in turn, implies the following estimate
  \begin{equation}\label{R9}
 \varrho_{m}^{o}(\bA^r_\bt(\Tr^d),L_p) \ll  v^{1-1/p -1/\bt -r/d} \quad\text{for}\quad  m\gg    v  (\log(2v ))^3.
\end{equation}

We now prove that (\ref{R9}) cannot be substantially improved. We use Lemma \ref{NLL1}. 
Take $n\in\N$ and  set $N:=2^n-1$, $\bN := (N,\dots,N)$.
Then for $f\in \Tr(\bN,d)_2$ we have by the H{\"o}lder inequality with parameter $2/\bt$
$$
  \left(\sum_{\bk: \|\bk\|_\infty\le N} |\hat f(\bk)|^\bt\right)^{1/\bt} \le (2N+1)^{d(1/\bt-1/2)} \left(\sum_{\bk: \|\bk\|_\infty\le N} |\hat f(\bk)|^2\right)^{1/2}
$$
$$
  =(2N+1)^{d(1/\bt-1/2)}\|f\|_2  .
$$
This bound and Lemma \ref{NLL1} with $q=2$ imply the following lower bound.

\begin{Proposition}\label{RP1} For $\bt\in (0,1]$ and $r>0$ we have for $2\le p \le \infty$
$$
\varrho_m^o(\bA^r_\bt(\Tr^d),L_p) \gg m^{1-1/p-1/\bt-r/d}.
$$
\end{Proposition}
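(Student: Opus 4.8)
The plan is to transport the lower bound of Lemma \ref{NLL1} to the class $\bA^r_\bt(\Tr^d)$ by exhibiting a \emph{scaled} unit $L_2$-ball of trigonometric polynomials that sits inside the class, and then to pull the scalar out using the positive homogeneity of $\varrho_m^o$. Concretely, I would fix $n\in\N$, set $N:=2^n-1$, $\bN:=(N,\dots,N)$, and work with $\Tr(2\bN,d)$, whose elements are band-limited to $\|\bk\|_\infty\le 2N<2^{n+1}$, so that their Fourier support meets only the dyadic blocks $j=0,1,\dots,n+1$.

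The first key step is the embedding $\lambda\,\Tr(2\bN,d)_2\subset\bA^r_\bt(\Tr^d)$ for a suitable $\lambda=\lambda(n)$. For $f\in\Tr(2\bN,d)$ with $\|f\|_2\le 1$, the displayed Hölder estimate preceding the proposition (applied with $2N$ in place of $N$, the computation being identical) gives $(\sum_\bk|\hat f(\bk)|^\bt)^{1/\bt}\le \vartheta(2\bN)^{1/\bt-1/2}\le C(d)\,2^{nd(1/\bt-1/2)}$, where $1/\bt\ge 1>1/2$ is used. Since every dyadic block sum is dominated by the full sum, and since the blocks $j\ge n+2$ are empty, the condition \eqref{IAr} holds for $\lambda f$ provided $\lambda\,C(d)\,2^{nd(1/\bt-1/2)}\le 2^{-r(n+1)}$; the binding constraint is the top nonempty block $j=n+1$, so it suffices to take $\lambda:=c(d,r)\,2^{-nd(1/\bt-1/2)}\,2^{-rn}$.

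Next I would invoke Lemma \ref{NLL1} with $q=2$: for $m\le\vartheta(\bN)/2$ one has $\varrho_m^o(\Tr(2\bN,d)_2,L_p)\ge c(d)\,\vartheta(\bN)^{1/2-1/p}\asymp 2^{nd(1/2-1/p)}$. By monotonicity together with the positive homogeneity $\varrho_m^o(\lambda\bF,L_p)=\lambda\,\varrho_m^o(\bF,L_p)$ (immediate from the definition, since writing $f=\lambda g$ and replacing $\cM$ by $\lambda^{-1}\cM(\lambda\,\cdot\,)$ rescales the whole supremum), the inclusion yields $\varrho_m^o(\bA^r_\bt(\Tr^d),L_p)\ge\lambda\,\varrho_m^o(\Tr(2\bN,d)_2,L_p)\gg 2^{-n(r+d/\bt-d/2)}\,2^{nd(1/2-1/p)}=2^{nd(1-1/p-1/\bt-r/d)}$. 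Finally, for a given $m$ I would choose $n$ minimal with $\vartheta(\bN)/2\ge m$, so that $2^{nd}\asymp m$; since the exponent $1-1/p-1/\bt-r/d$ is negative, substituting $2^{nd}\asymp m$ produces the claimed bound $\varrho_m^o(\bA^r_\bt(\Tr^d),L_p)\gg m^{1-1/p-1/\bt-r/d}$, valid for all $2\le p\le\infty$ (the restriction $p\ge 2$ being exactly what Lemma \ref{NLL1} with $q=2$ needs).

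I expect the only genuinely delicate point to be the bookkeeping in the embedding step: one must verify \eqref{IAr} simultaneously for every $j$ and confirm that the most restrictive requirement comes from the top block $j\asymp n$, which is what pins down the scale $\lambda$. The remaining exponent arithmetic and the optimization $2^{nd}\asymp m$ are routine, and the homogeneity of $\varrho_m^o$ — which is what lets $\lambda$ pass outside the characteristic — should be stated but needs no real work.
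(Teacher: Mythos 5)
Your proof is correct and is essentially the paper's own argument: the paper likewise combines the H\"older inequality $(\sum_\bk|\hat f(\bk)|^\bt)^{1/\bt}\le \vartheta^{1/\bt-1/2}\|f\|_2$ with Lemma \ref{NLL1} at $q=2$, the scaling/homogeneity of $\varrho_m^o$, and the choice $2^{nd}\asymp m$ (the paper merely leaves these last steps implicit). Your only deviation is a helpful one: you apply the H\"older step directly to $\Tr(2\bN,d)_2$ rather than $\Tr(\bN,d)_2$ as written in the paper, which is exactly the form needed to match the class appearing in Lemma \ref{NLL1}.
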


\section{Discussion}
\label{Di}

In this paper we emphasize importance of classes $\bA^r_\bt(\Psi)$, which are defined by structural conditions rather than by smoothness conditions. We give a brief history of the development of this idea. The first result, which connected the best approximations $\sigma_v(f,\Psi)_2$ with convergence of the series $\sum_{k}|\<f,\psi_k\>|$ was obtained by 
S.B. Stechkin \cite{Stech} in 1955 in the case of an orthonormal system $\Psi$. We formulate his result and a more general result momentarily. The following classes were introduced in \cite{DeTe} in studying sparse approximation with respect to arbitrary systems (dictionaries) $\D$ in a Hilbert space $H$. For a general dictionary $\D$ and for $\bt>0$ we 
define the class of functions (elements) 
$$
\cA^o_\bt(\Di,M) := \{f\in H\,:\, f=\sum_{k\in \Lambda} c_kg_k,\, g_k\in \D,\, |\Lambda| <\infty,\, 
\sum_{k\in \Lambda} |c_k|^\bt \le M^\bt\},
$$
and we define $\cA_\bt(\Di,M)$ as the closure (in $H$) of $\cA^o_\bt(\Di,M)$. Furthermore, we define $\cA_\bt(\Di)$ as the union of the classes $\cA_\bt(\Di,M)$ over all $M>0$. In the case of $\D$ being an orthonormal system S.B. Stechkin \cite{Stech} proved (see a discussion of this result in \cite{DTU}, Section 7.4)
\be\label{Di1}
f\in \cA_1(\D) \quad \text{if and only if}\quad \sum_{v=1}^\infty (v^{1/2}\sigma_v(f,\D)_2)\frac{1}{v} <\infty. 
\ee
A version of (\ref{Di1}) for the classes $\cA_\bt(\Di)$, $\bt\in (0,2)$, was obtained in \cite{DeTe}
\be\label{Di2}
f\in \cA_\bt(\D) \quad \text{if and only if}\quad \sum_{v=1}^\infty (v^{\alpha}\sigma_v(f,\D)_H)^\bt\frac{1}{v} <\infty, 
\ee
where $\al := 1/\bt-1/2$. In particular, (\ref{Di2}) implies that for $f\in \cA_\bt(\Di)$, $\bt\in (0,2)$, we have
\be\label{Di3}
\sigma_v(f,\D)_H \ll v^{1/2-1/\bt}.
\ee 
 
 We recall that relations (\ref{Di1}) -- (\ref{Di3}) were proved for an orthonormal system $\D$. 
 It is very interesting to note that (\ref{Di3}) actually holds for a general system $\D$ provided 
 $\bt \in (0,1]$. In the case $\bt =1$ it was proved by B. Maurey (see \cite{Pi}). For $\bt \in (0,1]$ it was proved in \cite{DeTe}. We note that classes $\cA_1(\Di)$ and their generalizations for the case of Banach spaces play a fundamental role in studying best $v$-term approximation and convergence properties of greedy algorithms with respect to general dictionaries $\D$ (see \cite{VTbook}). This fact encouraged experts to introduce and study function classes defined by restrictions on the coefficients of the functions' expansions. We explain this approach in detail. 
 
 Let as above a system $\Psi:=\{\psi_\bk\}_{\bk\in \Z^d}$ be indexed by $\bk\in\Z^d$. Consider a sequence of subsets $\cG:=\{G_j\}_{j=1}^\infty$, $G_j \subset \Z^d$, $j=1,2,\dots$, such that
 \be\label{Di4}
 G_1\subset G_2\subset \cdots \subset G_j\subset G_{j+1} \subset \cdots,\qquad \bigcup_{j=1}^\infty G_j =\Z^d.
 \ee
Consider functions representable in the form of absolutely convergent series 
\be\label{Direpr}
f = \sum_{\bk\in\Z^d} a_\bk(f)\psi_\bk,\qquad \sum_{\bk\in\Z^d} |a_\bk(f)|<\infty.
\ee
For $\bt \in (0,1]$ and $r>0$ consider the following class $\bA^r_\bt(\Psi,\cG)$ of functions $f$, which have representations (\ref{Direpr}) satisfying   conditions
\be\label{DiAr}
  \left(\sum_{  \bk\in G_j\setminus G_{j-1}} |a_\bk(f)|^\bt\right)^{1/\bt} \le 2^{-rj},\quad j=1,2,\dots,\quad G_0 :=\emptyset  .
\ee
One can also consider the following narrower version $\bA^r_\bt(\Psi,\cG,\infty)$ -- the class of functions $f$, which have representations (\ref{Direpr}) satisfying the  condition
\be\label{Di7}
\sum_{j=1}^\infty 2^{r\bt j} \sum_{  \bk\in G_j\setminus G_{j-1}} |a_\bk(f)|^\bt \le 1, \quad G_0 :=\emptyset  .
\ee

Probably, the first realization of the idea of the classes $\bA^r_\bt(\Psi,\cG)$ was done in \cite{VT150} in the special case, when $\Psi$ is the trigonometric system $\Tr^d := \{e^{i(\bk,\bx)}\}_{\bk\in\Z^d}$. We now proceed to the definition of classes $\bW^{a,b}_A(\Tr^d)$ from \cite{VT150}, which corresponds to the case $\bt=1$. Introduce the necessary notations.  
Let $\mathbf s=(s_1,\dots,s_d )$ be a  vector  whose  coordinates  are
nonnegative integers
$$
\rho(\mathbf s) := \bigl\{ \mathbf k\in\mathbb Z^d:[ 2^{s_j-1}] \le
|k_j| < 2^{s_j},\qquad j=1,\dots,d \bigr\},
$$
where $[a]$ means the integer part of a real number $a$.
For $f\in L_1 (\T^d)$
$$
\delta_{\mathbf s} (f,\mathbf x) :=\sum_{\mathbf k\in\rho(\mathbf s)}
\hat f(\mathbf k)e^{i(\mathbf k,\mathbf x)},\quad \hat f(\mathbf k) := (2\pi)^{-d}\int_{\T^d} f(\bx)e^{-i(\mathbf k,\mathbf x)}d\bx.
$$
Consider functions with absolutely convergent Fourier series. For such functions define the Wiener norm (the $A$-norm or the $\ell_1$-norm)
$$
\|f\|_A := \sum_{\mathbf k}|\hat f({\mathbf k})|.
$$
The following classes, which are convenient in studying sparse approximation with respect to the trigonometric system,
were introduced and studied in \cite{VT150}. Define for $f\in L_1(\T^d)$
$$
f_j:=\sum_{\|\bs\|_1=j}\delta_\bs(f), \quad j\in \N_0,\quad \N_0:=\N\cup \{0\}.
$$
For parameters $ a\in \R_+$, $ b\in \R$ define the class
$$
\bW^{a,b}_A:=\{f: \|f_j\|_A \le 2^{-aj}(\bar j)^{(d-1)b},\quad \bar j:=\max(j,1), \quad j\in \N\}.
$$
In this case 
$$
G_j := \bigcup_{\bs: \|\bs\|_1 \le j} \rho(\bs), \quad j=1,2,\dots,
$$
and classes $\bW^{a,b}_A$ with $b=0$ are similar to the above defined classes $\bA^a_1(\Tr^d,\cG)$. A more narrow version $\bA^a_1(\Tr^d,\cG,\infty)$ of these classes 
was studied recently in \cite{JUV}. 

The classes $\bA^r_\bt(\Psi)$ studied in this paper correspond to the case of $\bA^r_\bt(\Psi,\cG)$ with
\be\label{Di8}
G_j:= \{\bk\in \Z^d\,:\, \|\bk\|_\infty <2^j\},\quad j=1,2,\dots.
\ee
Note that classes $\bA^r_\bt(\Tr^d)$ are related to the periodic isotropic Nikol'skii classes $H^r_q$. There are several equivalent definitions of classes $H^r_q$. We give a convenient for us definition. Let $r>0$ and $1\le q\le \infty$. Class $H^r_q$ consists of periodic functions $f$ of $d$ variables satisfying the conditions
$$
\left\|\sum_{[2^{j-1}] \le \|\bk\|_\infty< 2^j} \hat f(\bk) e^{i(\bk,\bx)}\right\|_q \le 2^{-rj}, \quad j=0,1,\dots.
$$
For instance, it is easy to see that in the case $q=2$ and $r/d >1/2$ the class $H^r_q$ is embedded in the class $\bA^{r-d/2}_1(\Tr^d)$. However, the class $\bA^{r-d/2}_1(\Tr^d)$ is 
substantially larger than $H^r_2$. For instance, take $\bk \in G_j\setminus G_{j-1}$ with $G_j$ defined in (\ref{Di8}). Then the function $g_\bk := 2^{-(r-d/2)j}e^{i(\bk,\bx)}$ belongs to $\bA^{r-d/2}_1(\Tr^d)$ but does not belong to any $H^{r'}_2$ with $r'> r-d/2$. 

We now give a brief comparison of sampling recovery results for classes $H^r_q$ and $\bA^r_\bt(\Tr^d)$. Recall the setting of the optimal  linear recovery. For a fixed $m$ and a set of points  $\xi:=\{\xi^j\}_{j=1}^m\subset \Omega$, let $\Phi $ be a linear operator from $\bbC^m$ into $L_p(\Omega,\mu)$.
Denote for a class $\bF$ (usually, centrally symmetric and compact subset of $L_p(\Omega,\mu)$) (see \cite{VT51})
$$
\varrho_m(\bF,L_p) := \inf_{\text{linear}\, \Phi; \,\xi} \sup_{f\in \bF} \|f-\Phi(f(\xi^1),\dots,f(\xi^m))\|_p.
$$

It is known (see \cite{VTbookMA}, Theorem 3.6.4, p.125) that for all $1\le p,q\le \infty$, $r >d/q$, we have 
\be\label{Di9}
\varrho_m(H^r_q,L_p) \asymp m^{-r/d +(1/q-1/p)_+},\quad (a)_+ := \max(a,0).
\ee 
Clearly, (\ref{Di9}) implies the same upper bound for the $\varrho^o_m(H^r_q,L_p)$. 
Results on the lower bounds for $\varrho^o_m(\Tr(\bN,d)_q,L_p)$ from \cite{VT202} (see Lemma 4.3 there) and Lemma \ref{NLL1} of this paper show that the following relation holds
\be\label{Di10}
\varrho^o_m(H^r_q,L_p) \asymp m^{-r/d +(1/q-1/p)_+}.
\ee
In particular,
\be\label{Di11}
\varrho^o_m(H^r_2,L_p) \asymp m^{-r/d +(1/2-1/p)_+}.
\ee
The lower bound (1.5) from \cite{VT202} and the lower bound in (\ref{I13}) of this paper give the following bound in the case $\bt=1$
\be\label{Di12}
\varrho^o_m(\bA^r_1(\Tr^d,L_p) \gg m^{-1/2 +(1/2-1/p)_+ -r/d}.
\ee
The upper bound in (\ref{I13}) of this paper gives the following bound 
\be\label{Di13}
\varrho^o_m(\bA^r_1(\Tr^d,L_p) \ll \left(\frac{m}{(\log m)^3}\right)^{-1/2 +(1/2-1/p)_+ -r/d}.
\ee
Relations (\ref{Di11})--(\ref{Di13}) mean that we obtain close bounds for the class $H^r_2$ 
and for the larger class $\bA^{r-d/2}_1(\Tr^d)$, $r>d/2$. Note that for the class $H^r_2$ we 
obtain the same bounds for the linear sampling recovery. It is proved in \cite{VT202} that for 
the linear sampling recovery in $\bA^r_\bt(\Tr^d)$ we have
\be\label{Di14}
\varrho_m(\bA^r_\bt(\Tr^d,L_2) \gg m^{-r/d}.
\ee
Inequalities (\ref{Di14}) with $\bt=1$ and (\ref{Di13}) with $p=2$ demonstrate that nonlinear 
sampling recovery provides better error guarantees than linear sampling recovery.

  \Addresses


\begin{thebibliography}{9999}
 
 \bibitem{Bour} J. Bourgain, An improved estimate in the restricted isometry problem, In Geometric Aspects of Functional Analysis, volume 2116 of Lecture Notes in Mathematics, pages 65--70. Springer, 2014.
 
 



  \bibitem{DPTT} F. Dai, A. Prymak, V.N. Temlyakov, and  S.U. Tikhonov,  
  Integral norm discretization and related problems,  (Russian) {\it Uspekhi Mat. Nauk  }{\bf 74} (2019), no. 4(448), 3–58; translation in {\it Russian Math. Surveys} {\bf  74 } (2019), no. 4, 579--630 .
 
\bibitem{DT} F. Dai and V.N.  Temlyakov, Universal sampling discretization, {\it Constr. Approx.} (2023). Published: 25-04-2023. 
https://doi.org/10.1007/s00365-023-09644-2.

  \bibitem{DTM1} F.  Dai and V.N.  Temlyakov, Universal discretization and sparse sampling recovery,
 arXiv:2301.05962v1 [math.NA] 14 Jan 2023.
 
 \bibitem{DTM2} F. Dai and V.N. Temlyakov, Random points are good for universal discretization, J. Math. Anal. Appl., {\bf 529} (2024) 127570; arXiv.2301.12536[math.FA] 5 Feb 2023.
 
\bibitem{DTM3} F. Dai and V.N. Temlyakov, Lebesgue-type inequalities in sparse sampling recovery,
arXiv:2307.04161v1 [math.NA] 9 Jul 2023.

\bibitem{DeTe} R.A. DeVore and V.N. Temlyakov, Some remarks on greedy algorithms, 
Advances on Comput. Math., {\bf 5} (1996), 173--187. 

 \bibitem{DTU} Dinh D{\~u}ng, V.N. Temlyakov, and T. Ullrich, Hyperbolic Cross Approximation, Advanced Courses in Mathematics CRM Barcelona, Birkh{\"a}user, 2018; arXiv:1601.03978v2 [math.NA] 2 Dec 2016.
  
 
  
    \bibitem{HR} I. Haviv and O. Regev, The restricted isometry property of subsampled Fourier matrices, In Geometric aspects of functional analysis, volume 2169 of Lecture Notes in Math., pages 163--179. Springer, Cham, 2017.
 
 \bibitem{JUV} T. Jahn, T. Ullrich, and F. Voigtlaender, Sampling numbers of smoothness classes via 
 $\ell^1$-minimization, arXiv:2212.00445v1 [math.NA] 1 Dec 2022. 
 
 \bibitem{KKLT} B.S.  Kashin, E. Kosov, I. Limonova, and V.N.  Temlyakov, Sampling discretization and related problems,
 {\it J. Complexity} {\bf  71} (2022), Paper No. 101653.
 
 
 

 


  \bibitem{LT} E. Livshitz and V. Temlyakov,  Sparse approximation and recovery by greedy algorithms, IEEE Transactions on Information Theory, {\bf 60} (2014), 3989--4000;
arXiv: 1303.3595v1 [math.NA] 14 Mar 2013.
 
 


\bibitem{Pi} G. Pisier, Remarques sur un resultat non publi{\'e} de B. Maurey, Seminaire 
d'Analyse Fonctionalle, 1980-1981, Ecole Polytechnique Centre de Mathematiques, Palaiseau. 

\bibitem{Stech} S.B. Stechkin, On absolute convergence of orthogonal series, Dokl. AN SSSR, {\bf 102} (1955), 37--44 (in Russian).



\bibitem{VT51} V.N. Temlyakov, On Approximate Recovery of Functions with Bounded Mixed Derivative, J. Complexity, {\bf 9} (1993), 41--59.
 
   \bibitem{T1} V.N. Temlyakov,  Greedy algorithms in Banach spaces,
{\it Adv. Comput. Math.}  \textbf{14}  (2001), 277--292.
 
  \bibitem{VTbook} V.N. Temlyakov, Greedy Approximation, Cambridge University
Press, 2011.
 
  
 \bibitem{VT150} V.N. Temlyakov, Constructive sparse trigonometric approximation and other problems for functions with mixed smoothness, arXiv: 1412.8647v1 [math.NA] 24 Dec 2014, 1--37; Matem. Sb., {\bf 206} (2015), 131--160. 
 
   
 
 \bibitem{VTbookMA} V. Temlyakov, {\em Multivariate Approximation}, Cambridge University Press, 2018.


\bibitem{VT202} V. Temlyakov, Sparse sampling recovery by greedy algorithms, arXiv:2312.13163v2 [math.NA] 30 Dec 2023.
  

\bibitem{TWW} J.F. Traub, G.W. Wasilkowski, and H. Wo{\'z}niakowski, Information-Based Complexity, Academic Press, Inc., 1988.


 
  \end{thebibliography}
\end{document}